\documentclass{amsart}
\usepackage{amssymb,euscript,amsmath, mathrsfs, amscd}
\usepackage[dvips]{graphicx}
\usepackage[dvips]{color}

\newcounter{ENUM}
\newcommand{\itm}{\item}

\newenvironment{Ilist}{\renewcommand{\theENUM}{\Roman{ENUM}}\renewcommand{\itm}{\addtocounter{ENUM}{1}\item[(\theENUM)]}\begin{itemize}\setcounter{ENUM}{0}}{\end{itemize}}

\newcommand{\margh}[1]{}

\def\risom{\overset{\sim}{\rightarrow}}

\input xy
\xyoption{all}
\CompileMatrices

\def\ZZ{{\mathbb Z}}

\def\cE{{\mathcal E}}

\def\cG{{\mathcal G}}

\def\cS{{\mathcal S}}

\def\cC{{\mathcal C}}

\def\cU{{\mathcal U}}

\def\cX{{\mathcal X}}

\def\sE{{\mathscr E}}
\def\sF{{\mathscr F}}
\def\sG{{\mathscr G}}
\def\sI{{\mathscr I}}

\def\sL{{\mathscr L}}
\def\sM{{\mathscr M}}
\def\sO{{\mathscr O}}

\def\sHom{{\mathscr H}om}

\def\fg{{\mathfrak g}}

\def\vp{\varphi}

\def\dv{\operatorname{div}}

\def\Hom{\operatorname{Hom}}

\def\GL{\operatorname{GL}}

\def\Spec{\operatorname{Spec}}
\def\Pic{\operatorname{Pic}}

\def\res{\operatorname{res}}

\def\spn{\operatorname{span}}

\def\st{\operatorname{st}}

\def\gg{\operatorname{gg}}
\def\dg{\operatorname{dg}}
\def\ns{\operatorname{ns}}
\def\lcm{\operatorname{lcm}}

\newtheorem{thm}{Theorem}[section]
\newtheorem{prop}[thm]{Proposition}
\newtheorem{lem}[thm]{Lemma}
\newtheorem{cor}[thm]{Corollary}

\theoremstyle{definition}
\newtheorem{defn}[thm]{Definition}

\newtheorem{ex}[thm]{Example}

\theoremstyle{remark}

\newtheorem{rem}[thm]{Remark}
\newtheorem{rems}[thm]{Remarks}

\numberwithin{equation}{section}
\numberwithin{figure}{section}

\begin{document}
\title{Special determinants in higher-rank Brill-Noether theory}
\author{Brian Osserman}
\thanks{This work was partially carried out while the author was visiting
the Isaac Newton Institute during the 2011 program on Moduli Spaces, and
was partially supported by NSA grant H98230-11-1-0159.}

\begin{abstract} Continuing our previous study of modified expected 
dimensions for rank-$2$ Brill-Noether loci with prescribed special
determinants, we introduce a general framework which applies \textit{a 
priori} for arbitrary rank, and use it to prove modified expected dimension
bounds in several new cases, applying both to rank $2$ and to higher rank.
The main tool is the introduction of generalized alternating Grassmannians,
which are the loci inside Grassmannians corresponding to subspaces which 
are simultaneously isotropic for a family of multilinear alternating forms
on the ambient vector space. In the case of rank $2$ with $2$-dimensional 
spaces of sections, we adapt arguments due to Teixidor i Bigas to show that
our new modified expected dimensions are in fact sharp.
\end{abstract}

\maketitle

\section{Introduction}

The purpose of the present paper is to continue the systematic study
of higher-rank Brill-Noether loci with fixed special determinant initiated
in \cite{os16}. Given a smooth projective curve $C$ of genus $g$, and a
line bundle $\sL$ on $C$, we set up a general framework for proving 
dimension lower bounds for Brill-Noether loci with fixed determinant
$\sL$, expressed in terms of $h^1(C,\sL)$. Although our immediate goal is
a sharp understanding of the rank-$2$ case, the setup is carried out 
in full generality, including in arbitrary rank. We then apply it to
obtain concrete results in several families of cases for which the 
dimension of the space of sections considered is relatively small compared
to the rank.

Given $k,r$, denote by $\cG^k_{r,\sL}(C)$ the moduli stack of vector
bundles on $C$ of rank $r$ and fixed determinant $\sL$ together with a
$k$-dimensional space of global sections. The naive expected dimension
for $\cG^k_{r,\sL}(C)$ is
$$\rho-g:=(r^2-1)(g-1)-k(k-d+r(g-1)).$$

Our first theorem is as follows.

\begin{thm}\label{thm:main-1} Let $C$ be a smooth, projective curve of
genus $g$. Suppose $\sL \in \Pic^d(C)$, and $h^1(C,\sL)\geq m$. Given 
$r \geq 2$, let $\sE$ be a vector bundle of rank $r$ on $C$ with determinant 
$\sL$, and $V \subseteq H^0(C,\sE)$ a $k$-dimensional space of global 
sections. Suppose that in addition, one of the following conditions is
satisfied.
\begin{Ilist}
\itm $k=r$, and $V$ is not contained in any subbundle of $\sE$ of 
rank $r-2$.
\itm $k=r+1$, $m=1$, and no $r$-dimensional subspace of $V$ is contained 
in any subbundle of $\sE$ of rank $r-2$.
\itm $r=3$, $k=5 \text{ or }6$, $m=1$, and no $2$-dimensional subspace of 
$V$ is contained in any subbundle of $\sE$ of rank $1$.
\end{Ilist}

Then every component of $\cG^k_{r,\sL}(C)$ passing through the point 
corresponding to $(\sE,V)$ has dimension at least 
\begin{equation}\label{eq:mod-rho} \rho-g+m\binom{k}{r}.\end{equation}
\end{thm}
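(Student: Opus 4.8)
The plan centers on a deformation-theoretic computation comparing the actual tangent/obstruction spaces at the point $(\sE,V)$ against the naive expected dimension, and then identifying the extra contribution $m\binom{k}{r}$ with an honest lower bound coming from the geometry of the determinant constraint. Concretely, I would first recall the tangent-obstruction theory governing $\cG^k_{r,\sL}(C)$: deformations of $(\sE,V)$ with fixed determinant $\sL$ are governed by a hypercohomology complex built from the trace-zero endomorphisms $\mathit{ad}(\sE)$ (i.e.\ $\End(\sE)$ with the trace condition imposed to fix $\det$) together with the evaluation of sections in $V$. The naive expected dimension $\rho - g$ is precisely the Euler characteristic of this complex, so the discrepancy between the true dimension and $\rho-g$ is measured by the cokernel (obstruction) side. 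The key point is that under each hypothesis (I), (II), (III), the $h^1$-contribution that would normally obstruct deformations is \emph{partially killed}, because the relevant multilinear alternating forms vanish on $V$.

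Next I would bring in the paper's central construction, the generalized alternating Grassmannians. The hypothesis $h^1(C,\sL)\geq m$ supplies, via Serre duality, an $m$-dimensional space of sections of $\sL^\vee\otimes\omega_C$; pairing these against the $r$-fold wedge $\Lambda^r V \to H^0(C,\det\sE)=H^0(C,\sL)$ produces, for each of the $m$ independent dual vectors, an alternating $r$-form on $V$. The condition $\dim V = k$ with $k\geq r$ means $\Lambda^r V$ has dimension $\binom{k}{r}$, and each of the $m$ forms imposes the requirement that $V$ be isotropic. The geometric content of conditions (I)--(III) is exactly that these alternating forms are suitably nondegenerate on $V$ (the non-containment-in-subbundles hypotheses guarantee the forms do not degenerate in ways that would shrink the isotropic locus), so the generalized alternating Grassmannian cut out has the expected codimension. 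This is where the quantity $m\binom{k}{r}$ enters: it is the expected number of conditions (namely $m$ copies of the $\binom{k}{r}$-dimensional target), which translates into an increase of the expected dimension by $m\binom{k}{r}$ relative to the naive count, once one accounts for how fixing the determinant interacts with the section data.

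The main steps, in order, would be: (1) set up the local model of $\cG^k_{r,\sL}(C)$ near $(\sE,V)$ as the intersection of the universal Brill-Noether-type locus with the fixed-determinant condition, and write down the controlling complex explicitly; (2) using the hypothesis on subbundles, verify that the map from deformations to the obstruction space factors through the generalized alternating Grassmannian, so that the isotropy conditions are the only extra equations; (3) apply the dimension estimate for generalized alternating Grassmannians (established earlier in the paper) to bound the codimension of the isotropic locus by $m\binom{k}{r}$; (4) combine with the standard Brill-Noether deformation bound to conclude that every component through $(\sE,V)$ has dimension at least $\rho-g+m\binom{k}{r}$. I would handle the three cases (I), (II), (III) by reducing them to a uniform statement about the rank of the alternating forms restricted to the appropriate subspaces of $V$, since each non-containment hypothesis is precisely the condition ensuring the relevant $\binom{k}{r}$ (or $\binom{k}{r}$ restricted to subspaces, in cases (II) and (III)) conditions are independent.

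The hard part will be step (2)--(3): showing that the subbundle non-containment hypotheses translate \emph{exactly} into the nondegeneracy needed for the alternating Grassmannian codimension bound to be sharp, rather than merely an inequality. The subtlety is that the multilinear alternating forms arising from $h^1(C,\sL)$ can degenerate along loci where $V$ meets proper subbundles of $\sE$, and one must rule this out fiber-by-fiber; the three separate cases in the theorem reflect exactly the different combinatorial ranges of $(r,k,m)$ in which such degeneration can be controlled. I would expect the cleanest route to isolate a single lemma — that non-containment in a rank-$(r-2)$ (resp.\ rank-$1$) subbundle forces the induced forms on $V$ (resp.\ on all codimension-appropriate subspaces of $V$) to be nonzero — and then feed this directly into the generalized alternating Grassmannian dimension estimate to obtain \eqref{eq:mod-rho}.
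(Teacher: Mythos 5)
Your outline does reproduce several of the paper's ingredients at a high level---Serre duality converting $h^1(C,\sL)\geq m$ into $m$ independent maps $\sL \to \Omega^1_C$, isotropy of spaces of global sections, nondegeneracy extracted from the non-containment hypotheses, and a codimension estimate for generalized alternating Grassmannians---but two of your steps have genuine gaps. The first is the construction of the forms themselves. Multiplying a section of $\Omega^1_C\otimes\sL^{-1}$ against the image of $\bigwedge^r V \to H^0(C,\sL)$ produces an element of $H^0(C,\Omega^1_C)$, not a scalar, and there is no natural functional on $H^0(C,\Omega^1_C)$; so no scalar-valued alternating $r$-form on $V$ arises from the pairing you describe, and the GAG machinery requires $\sO$-valued forms. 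Moreover your framing is internally inconsistent: a form whose domain is $\bigwedge^r V$ cannot simultaneously be ``isotropic'' on $V$ (i.e., identically zero there) and ``suitably nondegenerate on $V$.'' The paper's resolution (Proposition \ref{prop:form-construct}) is to introduce an auxiliary divisor $D=\sum_i P_i$ and define the forms on the \emph{larger} space $\pi_*(\sE(D)/\sE(-(r-1)D))$ by taking residues at the $P_i$ of $(\xi\circ\psi)(\tilde{s}_1\wedge\dots\wedge\tilde{s}_r)$; global sections are then isotropic because a global section of $\Omega^1_C$ has no residues, while nondegeneracy is the injectivity of the map \eqref{eq:kermap}, $(\bigwedge^r V)^{\oplus m}\to\Hom(V,\sE|_x/V)^*$, a condition that refers essentially to the ambient space. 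Proposition \ref{prop:subspace-injective}---the analogue of your proposed ``single lemma''---shows that the non-containment hypotheses yield exactly this, but only after choosing sufficiently many auxiliary points $P_i$; without the residue construction that lemma cannot even be stated.

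The second gap is structural. Your steps (1) and (4)---a hypercohomology/obstruction-theoretic local model in which the $m\binom{k}{r}$ excess arises from obstructions being ``partially killed''---are asserted but never carried out, and they are not how the paper argues. The paper instead realizes $\cG^k_{r,\sL}(C)$, over the stack of fixed-determinant bundles, as the locus $\cG(k,\sF\cap\sG)$ of $k$-dimensional subspaces contained in \emph{two} isotropic subbundles of the ambient residue bundle: $\sF$ (sections regular along $D$) and $\sG$ (restrictions of sections of $\sE(D)$), whose intersection is fiberwise $H^0(\sE)$. The gain of $m\binom{k}{r}$ then comes from counting local equations for the incidence correspondence inside a product of generalized alternating Grassmannians rather than ordinary Grassmannians (Lemma \ref{lem:tangent-description} and Proposition \ref{prop:iso-intersect}); this equation count is what gives a bound valid for \emph{every} component through the point, as the theorem demands. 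Finally, your plan to treat cases (I)--(III) ``uniformly'' by a rank statement overlooks the delicate point that the equivalence between injectivity of \eqref{eq:kermap} and the absence of degenerate $r$-dimensional subspaces is genuinely case-specific: it holds when $k=r$, when $k=r+1$ and $m=1$, and when $r=3$, $k\leq 6$, $m=1$ (Propositions \ref{prop:gag-crit} and \ref{prop:gag-crit-2}), but it fails in general---and this failure is precisely why the theorem is limited to its three cases.
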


\begin{rems} 
(i) Even though we are only proving a lower bound on dimension,
a nondegeneracy hypothesis is required. This has not been the case
for previous work on the subject, but is expected to be a feature
of any further generalizations. The nondegeneracy hypothesis in cases (I)
and (II) is essentially a generic version of what Mukai calls 
``semiirreducibility'' in \cite{mu4}.

(ii) Only case (I) gives new results for rank $2$, since the main results of
\cite{os16} proved in particular the same dimension bound as above
in the cases $r=2,m \leq 2$, and with $k$ arbitrary.

(iii) Although case (III) may appear special, recall that $d$ and $g$ are
allowed to vary, so they still contain infinite families of rank-$3$
Brill-Noether loci, including some particularly interesting examples;
see Example \ref{ex:3-6} below.

(iv) Our arguments also work for families of special determinants, and can
thus be used to study the variable determinant case as well. See Theorem
\ref{thm:main-families} below for a precise statement.
\end{rems}

Under somewhat stronger nondegeneracy hypotheses than those imposed
in Theorem \ref{thm:main-1}, the dimension statements for cases (I) and
(II) may be approached via direct analysis. This has been done in the
literature for the case of varying determinant as follows: for 
$r=k=2$ by Teixidor i Bigas in \cite{te4}, and for the more general
cases by Bradlow-Garc\'ia-Prada-Mu\~noz-Newstead \cite{b-g-m-n1},
by Bradlow-Garc\'ia-Prada-Mercat-Mu\~noz-Newstead \cite{b-g-m-m-n1},
and by Bhosle-Brambila-Paz-Newstead \cite{b-b-n1}. However, the same
constructions may be applied to the fixed determinant case; see
Grzegorczyk and Newstead \cite{g-n1}. Of particular
note is that these constructions also show that the dimension lower bounds 
of Theorem \ref{thm:main-1} are sharp (still under the stronger 
nondegeneracy hypotheses). 

We illustrate these methods by following and elaborating on the arguments of 
Teixidor to verify that case (I) of Theorem \ref{thm:main-1} is sharp for 
$r=2$, even without additional nondegeneracy hypotheses.
To state the theorem, we denote by $\cG^{2,\gg}_{2,\sL}(C)$ the open substack
of $\cG^{2}_{2,\sL}(C)$ on which the bundle is generically generated by
the chosen space of global sections, and by
$\cG^{2,\st}_{2,\sL}(C)$ the open substack on which the underlying bundle is
stable.

\begin{thm}\label{thm:main-2} Let $C$ be a smooth, projective curve of
genus $g$. Suppose $\sL \in \Pic^d(C)$, and $h^1(C,\sL)=m$. 

Then $\cG^{2,\gg}_{2,\sL}(C)$ is nonempty if and only if $h^0(C,\sL)>0$, and in 
this case is irreducible of dimension $\rho-g+m$.

If $C$ is Brill-Noether general with respect to $\fg^1_{d'}$'s for 
all $d'$, then $\cG^2_{2,\sL}(C)$ has dimension $\rho-g+m$. If further
$h^0(C,\sL)>0$, then $\cG^{2,\gg}_{2,\sL}(C)$ is dense in
$\cG^2_{2,\sL}(C)$, and in particular $\cG^2_{2,\sL}(C)$ is irreducible.

Finally, when $h^0(C,\sL)>0$, the stack $\cG^{2,\st}_{2,\sL}(C)$ contains a 
nonempty open substack of $\cG^{2,\gg}_{2,\sL}(C)$ if $C$ is nonhyperelliptic 
and $d\geq 3$ or if $C$ is hyperelliptic and $d \geq 5$.
\end{thm}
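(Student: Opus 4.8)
The plan is to deduce the final claim from the irreducibility already established. Since $\cG^{2,\gg}_{2,\sL}(C)$ is irreducible of dimension $\rho-g+m$, and stability is an open condition in families of bundles, the intersection $\cG^{2,\st}_{2,\sL}(C)\cap\cG^{2,\gg}_{2,\sL}(C)$ is automatically an open substack; it therefore suffices to show it is nonempty, equivalently that the closed locus of \emph{non-stable} generically generated pairs does not exhaust $\cG^{2,\gg}_{2,\sL}(C)$. I would establish this by bounding the dimension of the non-stable locus strictly below $\rho-g+m$. A non-stable pair $(\sE,V)$ carries a line subbundle $\sM\subseteq\sE$ with $e:=\deg\sM\geq d/2$; writing $\sN:=\sL\otimes\sM^{-1}$ for the quotient, we have $\deg\sN=d-e\leq d/2$, and the extension $0\to\sM\to\sE\to\sN\to 0$ together with the pair $(\sM,V)$ records all of the relevant data.

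The key geometric input is to push the generating sections into the quotient. Because $\sE$ is generically generated and $\sM$ has rank $1$, the image $\bar V$ of $V$ in $H^0(C,\sN)$ is nonzero and generates $\sN$ away from the degeneracy divisor $D\in|\sL|$ (of degree $d$) on which $s_1\wedge s_2$ vanishes; in particular the base locus of the linear system spanned by $\bar V$ is contained in $D$. Two cases then arise. If $\bar V$ spans a two-dimensional system, then after removing its base points (which lie in $D$) one obtains a base-point-free pencil of degree at most $\deg\sN\leq d/2$, forcing $\operatorname{gon}(C)\leq d/2$. If instead $\bar V$ is effectively one-dimensional, then $\sN$ carries a section vanishing only along a subdivisor of $D$, so that $\sN\cong\cO_C(B)$ with $B\leq D$, which in turn pins down $\sM=\sL(-B)$.

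With this dichotomy in hand I would run the dimension count stratum by stratum in $e$. Each stratum fibers over the choice of $\sM\in\Pic^e(C)$ (contributing at most $g$), the extension class in $\Ext^1(\sN,\sM)=H^1(C,\sM^2\otimes\sL^{-1})$, and the two-dimensional space $V$ subject to the generic-generation constraint; combining these with Riemann-Roch for $h^0(\sN)$ and comparing against $\rho-g+m$ is precisely the Teixidor-style bookkeeping. For $C$ nonhyperelliptic we have $\operatorname{gon}(C)\geq 3$, so the pencil case is outright impossible once $d/2<3$ and only the effectively-one-dimensional case survives for small $d$, while for larger $d$ both cases remain dimensionally controlled by the stratawise estimate; together these account for the threshold $d\geq 3$. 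For $C$ hyperelliptic, by contrast, $\operatorname{gon}(C)=2$, and the hyperelliptic pencil can genuinely appear as a destabilizing quotient, so gonality alone does not exclude it.

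The main obstacle is exactly this hyperelliptic boundary case: one must show that the pairs $(\sE,V)$ whose quotient is a twist of the $\fg^1_2$ sweep out a substack of dimension strictly smaller than $\rho-g+m$ rather than all of $\cG^{2,\gg}_{2,\sL}(C)$. Here I would fix $\sN$ to be the relevant twist of the hyperelliptic bundle, observe that $\sM=\sL\otimes\sN^{-1}$ is then determined, and count the extensions $\Ext^1(\sN,\sM)$ and admissible section spaces directly; the requirement $d\geq 5$ is precisely the point at which this count drops below $\rho-g+m$. A secondary subtlety to fold into the same stratawise estimate is the strictly semistable borderline $\deg\sM=d/2$, which occurs only for $d$ even and must be treated so that it too contributes a proper substack.
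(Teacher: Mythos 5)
Your proposal addresses only the final paragraph of the theorem. You begin by ``deducing the final claim from the irreducibility already established,'' but the nonemptiness, the irreducibility and dimension $\rho-g+m$ of $\cG^{2,\gg}_{2,\sL}(C)$, and the Brill--Noether-general statements about $\cG^{2}_{2,\sL}(C)$ are themselves part of the statement being proved, and they constitute most of the work. In the paper these follow from the lower bound of Theorem \ref{thm:main-1} combined with two separate upper-bound arguments: a stratification of the frame bundle $\cS^{\gg}_{\sL}$ by the vanishing degree $d'$ of $s_1$, with each stratum bounded by an extension count and the $d'=0$ stratum shown to be irreducible (Proposition \ref{prop:exts1}), and a bound on the non-generically-generated locus $\cG^{2,\dg}_{2,\sL}(C)$ using Brill--Noether generality with respect to $\fg^1_{d'}$'s (Proposition \ref{prop:exts2}). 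None of this appears in your proposal, so the first two paragraphs of the theorem are simply assumed.

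Even for the stable-locus claim, your structural reduction is fine (openness of stability plus irreducibility reduces everything to bounding the non-stable generically generated locus strictly below $\rho-g+m$; the dichotomy on the image $\bar V$ in the destabilizing quotient $\sN$ and the gonality obstruction are correct), but the decisive step is missing: the dimension count is deferred to ``Teixidor-style bookkeeping,'' and the thresholds $d\geq 3$ (nonhyperelliptic) and $d\geq 5$ (hyperelliptic) are asserted rather than derived. Note that gonality disposes of the pencil case only when $d/2<\operatorname{gon}(C)$, i.e.\ for $d\leq 5$ in the nonhyperelliptic case, so for all larger $d$ everything rests on the count you have not done. Moreover, the count you sketch --- choices of $\sM$, plus extension classes in $H^1(C,\sM^2\otimes\sL^{-1})$, plus choices of $V$ bounded via Riemann--Roch for $h^0(\sN)$ --- would fail as stated, because $h^0(\sE)$ jumps on special extension classes, so the space of admissible $V$'s cannot be bounded uniformly over the extension space. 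The paper instead counts pairs consisting of sections together with the extensions in which they lift: via Serre duality, the extension class lies in $H^0(C,\Omega^1_C\otimes\sL)^*$ and must annihilate the images of the two multiplication maps $\otimes s$ and $\otimes t$ (where $t$ is the composite $\sM\to\sE\to\sL$), and one needs a lower bound on the dimension of the span of these images. That lower bound comes from bounding their intersection $H^0(C,\Omega^1_C(-D'+D''))$ by Clifford's theorem, and it is precisely the equality cases of Clifford (hyperelliptic curves, $\Omega^1_C(-D'+D'')\cong\sO_C$, $D'=D''$) that generate the hyperelliptic/nonhyperelliptic distinction and the degree thresholds in the statement. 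This interaction between the two lifting conditions is the key idea of the proof, and it is absent from your proposal.
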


We conclude with a discussion of the prospects for further generalization,
and speculation on the possible form of sharp dimension bounds in rank $2$.
In the process, we investigate several examples from the literature, and
find that their constructions of Brill-Noether loci having greater than the
expected dimension can be explained by our results.

As in \cite{os16}, the techniques underlying Theorem \ref{thm:main-1}
(and the more general framework) involve suitable generalizations of
symplectic Grassmannians. Beyond introducing families of alternating
forms as was already considered in \cite{os16}, to treat the higher-rank
case we consider multilinear forms instead of just bilinear forms. This
adds additional complications, but due to some simplifications in the
overall strategy we are able to prove Theorem \ref{thm:main-1}. There is
great potential for further generalization, but it will involve a more
delicate analysis of how to translate the (multi)linear algebra into
suitable nondegeneracy conditions.

In contrast, as in \cite{te4}, Theorem \ref{thm:main-2} is proved using a 
careful study of extensions, and the proof is not expected to generalize. 
Systematic use of stack-theoretic dimension counting simplifies the 
arguments. 

Others have previously considered the two directions of generalization of 
symplectic Grassmannians discussed above. Subspaces simultaneously 
isotropic for families of alternating forms have been studied by Buhler,
Gupta and Harris \cite{b-g-h1} in the context of group theory, 
while Tevelev \cite{te6} has studied subspaces isotropic for generic 
multilinear alternating forms. However, in both cases the
focus was on nonemptiness questions, whereas in our case we need to
develop criteria for the spaces to be smooth of expected dimension at 
a particular point.

\section*{Acknowledgements} I would like to thank Peter Newstead for 
helpful conversations.

\section{Generalized alternating Grassmannians}

Let $X$ be a scheme, and $\sE$ a vector bundle on $X$ of rank $n$.
Recall that an $r$-linear alternating form on $\sE$ is a morphism 
$$\langle,\dots,\rangle:\bigwedge^r \sE \to \sO_X.$$
A subbundle $\sF \subseteq \sE$ is isotropic for 
$\langle,\dots,\rangle$ if the restriction of $\langle,\dots,\rangle$ to 
$\bigwedge^r \sF$ is equal to $0$. The subbundle $\sF$ is degenerate for
$\langle,\dots,\rangle$ if the induced morphism
$\bigwedge^{r-1} \sF \to \sE^*$ is equal to $0$.

Suppose we are given a collection
$$\underline{\langle,\dots,\rangle}
=\{\langle,\dots,\rangle_1, \dots, \langle,\dots, \rangle_m\}$$ 
of $m$ $r$-linear alternating forms on $\sE$.
Then we make the following definition:

\begin{defn} Given $k<n$, we have the {\bf generalized alternating 
Grassmannian} 
$GAG(k,\sE,\underline{\langle,\dots,\rangle})$ defined as the closed
subscheme of $G(k,\sE)$ whose points correspond to subbundles which are 
simultaneously isotropic for every 
$\langle,\dots,\rangle_i \in \underline{\langle,\dots,\rangle}$.
\end{defn}

If $X$ is a point and the forms are
sufficiently general, the generalized alternating Grassmannian has
codimension $m\binom{k}{r}$ in $G(k,\sE)$. However, the case of interest
for us is not completely general, so we have to carry out a closer analysis.
The case $r=2, m\leq 2$ was handled in \cite{os16}.
We will see that the same criterion considered in \textit{loc.\ cit.} (which 
does not hold in general) also holds when $k=r$, or when $m=1$ and $k=r+1$. 
We first give a general description
translating smoothness into (multi)linear algebra.

\begin{lem}\label{lem:tangent-description} Suppose $\sE$ is a vector bundle
of rank $n$ on a scheme $X$, and $\langle,\dots,\rangle_i$
for $i=1,\dots,m$ are $r$-linear alternating forms on $\sE$.
Given a field $K$, and a $K$-valued point $x$ of $X$, suppose we have 
$V \subseteq \sE|_x$ corresponding to a $K$-valued point 
$z$ of $GAG(k,\sE|_x,\underline{\langle,\dots,\rangle})$. Then at the (image
of the) point $z$, we have 
$GAG(k,\sE,\underline{\langle,\dots,\rangle})$ 
smooth over $X$ of codimension $m\binom{k}{r}$ inside $G(k,\sE)$
if and only if the induced map of $K$-vector spaces
\begin{equation}\label{eq:kermap} 
(\bigwedge^r V)^{\oplus m} \to \Hom(V,\sE|_x/V)^*
\end{equation}
is injective, where the map is determined by 
$$\sum_{i=1} ^m v_{1,i} \wedge \dots \wedge v_{r,i} \mapsto (\vp \mapsto
\sum_{i=1}^m \sum_{j=1}^r 
\langle v_{1,i},\dots,\varphi(v_{j,i}),\dots,v_{r,i}\rangle_i).$$
\end{lem}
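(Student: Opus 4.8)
The plan is to exhibit $GAG(k,\sE,\underline{\langle,\dots,\rangle})$ as the zero scheme of a single section of a vector bundle on the relative Grassmannian $G(k,\sE)\to X$, reduce the smoothness assertion to the surjectivity of an intrinsic derivative via the standard transversality criterion, and then dualize. First I would let $\cU\subseteq \sE|_{G(k,\sE)}$ denote the tautological rank-$k$ subbundle. Each form $\langle,\dots,\rangle_i$ restricts along $\bigwedge^r\cU\hookrightarrow\bigwedge^r\sE|_{G(k,\sE)}$ to a morphism $\bigwedge^r\cU\to\sO_{G(k,\sE)}$, equivalently a section $s_i$ of $(\bigwedge^r\cU)^*$; by definition $GAG$ is the common zero locus of the $s_i$, hence $GAG=V(s)$ for the section $s=(s_1,\dots,s_m)$ of the rank-$m\binom{k}{r}$ bundle $\cW:=\bigl((\bigwedge^r\cU)^{\oplus m}\bigr)^*$. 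The chosen point $z$ lies in $V(s)$ precisely because $V$ is isotropic for every form.

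Next I would invoke the relative version of the transversality criterion for zero schemes of sections: if $\cW$ is a vector bundle on a scheme $Y$ smooth over $X$, and $s$ is a section vanishing at a point $z$, then $V(s)$ is smooth over $X$ of codimension $\rk\cW$ in $Y$ at $z$ if and only if the intrinsic relative derivative $ds_z\colon T_z(Y/X)\to\cW|_z$ is surjective. Here $ds_z$ is well defined at a zero of $s$, independently of local trivializations. Applied with $Y=G(k,\sE)$, this identifies the desired conclusion with surjectivity of a map $ds_z\colon\Hom(V,\sE|_x/V)\to\bigl((\bigwedge^rV)^{\oplus m}\bigr)^*$, using the standard identifications $T_z(G(k,\sE)/X)=\Hom(V,\sE|_x/V)$ and $\cW|_z=((\bigwedge^rV)^{\oplus m})^*$.

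It then remains to compute $ds_z$ explicitly. I would represent a tangent vector $\varphi\in\Hom(V,\sE|_x/V)$ by the first-order deformation $V_\epsilon=\{v+\epsilon\tilde\varphi(v):v\in V\}$ over $K[\epsilon]$, with $\tilde\varphi\colon V\to\sE|_x$ any lift of $\varphi$, and evaluate $s_i$ on the deformed decomposables $(v_1+\epsilon\tilde\varphi(v_1))\wedge\dots\wedge(v_r+\epsilon\tilde\varphi(v_r))$. The constant term vanishes because $V$ is isotropic---this is exactly what makes the derivative intrinsic---and multilinearity gives the $\epsilon$-coefficient
\[
ds_i|_z(\varphi)\colon v_1\wedge\dots\wedge v_r\ \longmapsto\ \sum_{j=1}^r\langle v_1,\dots,\tilde\varphi(v_j),\dots,v_r\rangle_i ,
\]
which is independent of the chosen lift since isotropy kills every term in which $\tilde\varphi(v_j)$ is replaced by an element of $V$. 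Assembling over $i$ gives $ds_z(\varphi)=(ds_i|_z(\varphi))_i$.

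Finally, since all spaces are finite-dimensional, $ds_z$ is surjective if and only if its transpose $(ds_z)^*\colon(\bigwedge^rV)^{\oplus m}\to\Hom(V,\sE|_x/V)^*$ is injective. Unwinding the pairing shows that $(ds_z)^*$ sends $\sum_i v_{1,i}\wedge\dots\wedge v_{r,i}$ to the functional $\varphi\mapsto\sum_{i=1}^m\sum_{j=1}^r\langle v_{1,i},\dots,\varphi(v_{j,i}),\dots,v_{r,i}\rangle_i$, i.e.\ exactly the map \eqref{eq:kermap}. This yields the claimed equivalence. I expect the main obstacle to be the careful justification of the transversality criterion in the \emph{relative} setting---establishing both implications and the fact that surjectivity of the relative derivative is equivalent to smoothness over $X$ of precisely the expected codimension (rather than merely smoothness fiberwise)---whereas the derivative computation itself is routine multilinear algebra.
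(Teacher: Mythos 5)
Your proposal is correct and takes essentially the same approach as the paper: the paper likewise views $GAG$ as cut out by $m\binom{k}{r}$ equations in $G(k,\sE)$, computes fiberwise the same derivative map (phrased as the tangent-space map $\Hom(V,\sE|_x/V)\to\bigoplus_{i=1}^m(\bigwedge^r V)^*$ dual to \eqref{eq:kermap}), and dualizes. The ``relative transversality criterion'' you flag as the main remaining obstacle is precisely what the paper supplies via Lemma \ref{lem:lci-smooth} (resting on Proposition 2.2.7 of Bosch--L\"utkebohmert--Raynaud), combined with the remark that smoothness of the fiber may be checked after extension of the base field.
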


The following lemma is standard, but we state it for convenience of
notation:

\begin{lem}\label{lem:lci-smooth} Let $X \to S$ be smooth of relative
dimension $d$, and $Z \subseteq X$
a closed subscheme. Suppose that for some $z \in Z$, with image $s \in S$,
we have that the ideal sheaf $\sI_Z$ is generated by $c$ elements locally
near $z$, and that the fiber $Z_s$ is smooth at $z$ over $\Spec \kappa(s)$, of
codimension $c$ in $X_s$. Then $Z$ is smooth at $z$ of relative dimension 
$d-c$ over $S$.
\end{lem}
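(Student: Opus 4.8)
The plan is to deduce the statement from two standard facts: the fiberwise criterion for smoothness (a morphism locally of finite presentation is smooth at a point exactly when it is flat there and its fiber is smooth at that point), and the local criterion for flatness along a relative regular sequence (``slicing''). Since smoothness is local on the source, I would work in the local rings and write $R=\cO_{S,s}$ and $B=\cO_{X,z}$, so that $\cO_{Z,z}=B/(f_1,\dots,f_c)$, where $f_1,\dots,f_c$ are the given local generators of $\sI_Z$ near $z$. Note first that $Z\to S$ is locally of finite presentation: $X\to S$ is so because it is smooth, and $Z\hookrightarrow X$ is cut out locally by the finitely many equations $f_i$; the relevant local rings are Noetherian, so the flatness criterion below applies.

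The crux is to produce flatness, and for this I would first check that the images $\bar f_1,\dots,\bar f_c$ of the $f_i$ in the fiber ring $\cO_{X_s,z}$ form a regular sequence. Because $X\to S$ is smooth, the fiber $X_s$ is regular, hence Cohen--Macaulay, and $\cO_{X_s,z}$ has dimension $d$. The $\bar f_i$ generate the ideal of $Z_s$ near $z$, and by hypothesis $\cO_{Z_s,z}=\cO_{X_s,z}/(\bar f_1,\dots,\bar f_c)$ is smooth, hence regular, of dimension $d-c$. In a Cohen--Macaulay local ring, $c$ elements whose quotient has dimension exactly $d-c$ automatically form a regular sequence, which gives the claim. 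Since $X\to S$ is smooth it is in particular flat, so $B$ is flat over $R$; combining this with the fact that $\bar f_1,\dots,\bar f_c$ is a regular sequence on $B\otimes_R\kappa(s)=\cO_{X_s,z}$, the slicing criterion (applied one $f_i$ at a time) shows that $B/(f_1,\dots,f_c)=\cO_{Z,z}$ is again flat over $R$. Thus $Z\to S$ is flat at $z$.

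Finally, since the fiber $Z_s$ is smooth at $z$ of dimension $d-c$ by hypothesis and $Z\to S$ has just been shown to be flat at $z$, the fiberwise criterion yields that $Z\to S$ is smooth at $z$, necessarily of relative dimension equal to the fiber dimension $d-c$. I expect the flatness step to be the only real obstacle; everything else is bookkeeping with dimensions together with the standard criteria. An essentially equivalent route is to invoke the relative Jacobian criterion directly, using that smoothness of $Z_s$ at $z$ of codimension $c$ is equivalent to the linear independence of $d\bar f_1,\dots,d\bar f_c$ in $\Omega_{X_s/\kappa(s)}\otimes\kappa(z)=\Omega_{X/S}\otimes\kappa(z)$; but this criterion is itself proved via the same flatness input, so I would present the fiberwise argument as the primary one.
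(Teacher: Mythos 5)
Your argument is correct in substance, but it takes a genuinely different route from the paper's. The paper's proof is a two-line reduction to the relative Jacobian criterion, Proposition 2.2.7 of \cite{b-l-r}: part (c) of that proposition, applied to the fibers $Z_s \subseteq X_s$, converts the hypotheses (fiber smooth of codimension $c$, ideal generated by $c$ elements) into linear independence of $df_1,\dots,df_c$ in $\Omega^1_{X/S}|_z$, and part (d), applied to $Z \subseteq X$ over $S$, then yields that $Z$ is smooth at $z$ over $S$ of relative dimension $d-c$. This is exactly the ``essentially equivalent route'' you mention at the end and set aside; what the citation buys is brevity, and it is valid over an arbitrary base scheme. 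Your primary argument instead makes the flatness mechanism explicit: regularity (hence Cohen--Macaulayness) of $\cO_{X_s,z}$ plus the codimension count shows that $\bar f_1,\dots,\bar f_c$ is a regular sequence, the slicing criterion propagates flatness from $\cO_{X,z}$ to $\cO_{Z,z}$, and the fiberwise criterion (flat with smooth fiber implies smooth, for morphisms locally of finite presentation) finishes. This decomposition is sound and more self-contained, but two details need tightening. First, your claim that ``the relevant local rings are Noetherian'' does not follow from the hypotheses: the lemma imposes no Noetherian condition on $S$, and the paper applies it with the base an arbitrary scheme (in the proof of Lemma \ref{lem:tangent-description}); you should instead invoke the finitely presented version of the slicing criterion (EGA IV, \S 11.3), which applies precisely because, as you correctly note, $Z \to S$ is locally of finite presentation near $z$, or else reduce to the Noetherian case by a standard limit argument. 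Second, $\dim \cO_{X_s,z}=d$ and $\dim \cO_{Z_s,z}=d-c$ hold only when $z$ is a closed point of its fiber; in general both drop by $\dim \overline{\{z\}}$, but their difference is still $c$ (the fibers being catenary and locally irreducible at $z$ by smoothness), which is all the Cohen--Macaulay regular-sequence step actually requires.
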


\begin{proof} This follows essentially immediately from Proposition 2.2.7 of
\cite{b-l-r}. Indeed, if $f_1,\dots,f_c$ are local generators for $\sI_Z$
near $z$, then applying part (c) of {\it loc.\ cit.} to the fibers $X_s$ and 
$Z_s$ we find that the differentials $df_1,\dots,df_c$ must be linearly 
independent in $\Omega^1_{X/S}|_z$. But then applying part (d) of
{\it loc.\ cit.} to $X$ and $Z$, we find that $Z$ is smooth at $z$ of 
relative dimension $d-c$, as desired.
\end{proof}

\begin{proof}[Proof of Lemma \ref{lem:tangent-description}]
Recall that if $E$ is a $K$-vector space, and
$V \subseteq E$ corresponds to a $K$-valued point $z$ of the classical
Grassmannian $G(k,E)$, then the tangent 
space to $G(k,E)$ at $z$ is given by $\Hom(V,E/V)$. Now, if 
$\langle ,\dots,\rangle$ is an $r$-linear alternating form on $E$, and
$V$ is isotropic for $\langle,\dots,\rangle$, then every tangent vector
of $G(k,E)$ at $z$ gives us an $r$-linear alternating form 
$\langle,\dots,\rangle^{\vp}$ as follows: 
if the tangent vector is given by $\vp \in \Hom(V,E/V)$, the associated
form is determined by sending $v_1 \wedge \dots \wedge v_r \in \bigwedge^r V$
to 
$$\sum_{i=1}^r \langle v_1,\dots,\vp(v_i),\dots,v_r\rangle.$$
This gives us a map 
$$\Hom(V,E/V) \to \left(\bigwedge^r V\right)^*.$$
Thus, the given alternating forms induce a map
$$\Hom(V,E/V) \to \bigoplus_{i=1}^m \left(\bigwedge^r V\right)^*.$$
It is easy to see that the tangent space to
$GAG(k,E,\underline{\langle,\dots,\rangle})$ is precisely the kernel of 
this map. Note also that this map is dual to \eqref{eq:kermap} (with $E$
in place of $\sE|_x$). Now, we know 
that $GAG(k,E,\underline{\langle,\dots,\rangle})$ is locally cut out by
$m\binom{k}{r}$ equations inside $G(k,E)$, so every component of 
$GAG(k,E,\underline{\langle,\dots,\rangle})$ has codimension
at most $m \binom{k}{r}$ in $G(k,E)$, and 
$GAG(k,E,\underline{\langle,\dots,\rangle})$ is smooth at $z$ of pure
dimension $k(n-k)-m\binom{k}{r}$ if and only if the tangent space at $z$
has dimension is $k(n-k)-m\binom{k}{r}$,
if and only if the above map is surjective. This
in turn is equivalent to the injectivity of \eqref{eq:kermap}, again with 
$E$ in place of $\sE|_x$.

Considering the situation of the lemma statement, if we set $E=\sE|_x$,
recalling that smoothness of a fiber may be checked after extending the 
base field we conclude from the above that the fiber over $x$ of 
$GAG(k,\sE,\underline{\langle,\dots,\rangle})$ is smooth of codimension
$m\binom{k}{r}$ in $G(k,\sE)$ at the point $z$ if and only if \eqref{eq:kermap}
is injective. Finally, we conclude the statement of the lemma by
applying Lemma \ref{lem:lci-smooth}.
\end{proof}

We thus conclude the following general statement on loci of subbundles
contained in two given subbundles.

\begin{prop}\label{prop:iso-intersect} Suppose $\sE$ is a vector bundle
of rank $n$ on an algebraic stack $\cX$ of finite type over a universally
catenary scheme $S$, and $\langle,\dots,\rangle_i$
for $i=1,\dots,m$ are $r$-linear alternating forms on $\sE$. Let $\sF$ and
$\sG$ be subbundles of $\sE$ of ranks $s$ and $t$, both isotropic with
respect to all of the $\langle,\dots,\rangle_i$. Let
$\cG(k,\sF\cap\sG)$ denote the closed substack of $\cG(k,\sE)$ parametrizing
rank-$k$ subbundles of $\sE$ contained in both $\sF$ and $\sG$.
Suppose that for some field $K$ and some $K$-valued point $x$ of $\cX$, we 
have $V \subseteq \sF|_x \cap \sG|_x$
such that the map \eqref{eq:kermap} is injective.
Then every component of $\cG(k,\sF\cap\sG)$ passing
through the point corresponding to $V$ has codimension at
most
$$k(2n-s-t)-m\binom{k}{r}$$
in $\cG(k,\sE)$.
\end{prop}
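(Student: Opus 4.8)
The plan is to realize $\cG(k,\sF\cap\sG)$ as the intersection of two substacks that are smooth over $\cX$, carried out not inside $\cG(k,\sE)$ but inside the generalized alternating Grassmannian, and then to bound its codimension by counting defining equations via Krull's height theorem. Write $\cZ := GAG(k,\sE,\underline{\langle,\dots,\rangle})$ and let $z$ be the point corresponding to $V$. First I would feed the injectivity of \eqref{eq:kermap} into Lemma \ref{lem:tangent-description} to conclude that $\cZ$ is smooth over $\cX$ at $z$, of relative dimension $k(n-k)-m\binom{k}{r}$, i.e.\ of codimension $m\binom{k}{r}$ in $\cG(k,\sE)$ near $z$. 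Since smoothness and the equation counts below are local in the smooth topology, I would pass to a smooth atlas $U\to\cX$, pull back $\sE$ and the forms, and work with honest Noetherian schemes throughout, descending the final codimension bound at the end. Next, because $\sF$ and $\sG$ are isotropic and $\bigwedge^r\sF'\hookrightarrow\bigwedge^r\sF$ for any subbundle $\sF'\subseteq\sF$, every subbundle of $\sF$ (resp.\ $\sG$) is again isotropic; hence $\cG(k,\sF)$ and $\cG(k,\sG)$ are closed substacks of $\cZ$, and so is their intersection $\cG(k,\sF\cap\sG)=\cG(k,\sF)\cap\cG(k,\sG)$.

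The heart of the argument is an equation count inside $\cZ$. The projections $\cG(k,\sF)\to\cX$ and $\cG(k,\sG)\to\cX$ are Grassmannian bundles of subbundles, hence smooth of relative dimensions $k(s-k)$ and $k(t-k)$. A closed immersion of stacks smooth over $\cX$ is a regular immersion, cut out locally by exactly (relative) codimension-many equations; applied near $z$ this exhibits $\cG(k,\sF)$ as cut out in $\cZ$ by $k(n-s)-m\binom{k}{r}$ equations and $\cG(k,\sG)$ by $k(n-t)-m\binom{k}{r}$ equations. Therefore $\cG(k,\sF\cap\sG)$ is cut out in $\cZ$, near $z$, by
\[
\bigl(k(n-s)-m\tbinom{k}{r}\bigr)+\bigl(k(n-t)-m\tbinom{k}{r}\bigr)=k(2n-s-t)-2m\tbinom{k}{r}
\]
equations, so by Krull's height theorem every component of $\cG(k,\sF\cap\sG)$ through $z$ has codimension at most $k(2n-s-t)-2m\binom{k}{r}$ in $\cZ$. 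Adding back $\codim_{\cG(k,\sE)}\cZ=m\binom{k}{r}$ — the codimensions being additive because $\cZ$ is smooth, hence locally irreducible, at $z$ and everything is catenary over the universally catenary base $S$ — gives the desired bound $k(2n-s-t)-m\binom{k}{r}$.

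The main obstacle, and precisely the source of the improvement over the naive estimate $k(2n-s-t)$ that one gets by cutting $\cG(k,\sF\cap\sG)$ directly out of $\cG(k,\sE)$, is the equation count of the second paragraph: I must verify that $\cG(k,\sF)$ and $\cG(k,\sG)$ are genuinely regularly embedded in $\cZ$ of the stated codimensions, so that each factor can be defined by $m\binom{k}{r}$ fewer equations than it would need in $\cG(k,\sE)$. This is exactly what the smoothness of $\cZ$ at $z$ buys us, and it is why Lemma \ref{lem:tangent-description} must be invoked at the outset. The remaining work is bookkeeping: the reduction from the stack $\cX$ to a smooth atlas so that the regular-immersion and Krull statements apply to ordinary schemes, and the verification that the relevant codimensions add, for which the universally catenary hypothesis is used.
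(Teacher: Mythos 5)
Your proposal is correct in substance but takes a genuinely different route from the paper's. The paper treats $\sF$ and $\sG$ asymmetrically: it maps $\cG(k,\sF)$ into the product $P = GAG(k,\sE,\underline{\langle,\dots,\rangle}) \times_S GAG(t,\sE,\underline{\langle,\dots,\rangle})$ using the universal subbundle and the pullback of $\sG$, realizes $\cG(k,\sF\cap\sG)$ as the preimage of the incidence correspondence $I \subseteq P$, and makes a \emph{single} application of the smooth-inside-smooth regular-immersion fact (Proposition 2.2.7 of \cite{b-l-r}), namely to $I \subseteq P$ over the second factor: $I \to GAG(t,\sE,\underline{\langle,\dots,\rangle})$ is a relative Grassmannian, smooth of relative dimension $k(t-k)$, while $P \to GAG(t,\sE,\underline{\langle,\dots,\rangle})$ is smooth of relative dimension $k(n-k)-m\binom{k}{r}$ at $z$ by Lemma \ref{lem:tangent-description}, so $I$ is cut out near $z$ by $k(n-t)-m\binom{k}{r}$ equations; pulling these back to $\cG(k,\sF)$, which has pure codimension $k(n-s)$ in $\cG(k,\sE)$, and applying Krull plus the catenary hypothesis gives the bound. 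You instead stay inside $\cZ = GAG(k,\sE,\underline{\langle,\dots,\rangle})$, note that both $\cG(k,\sF)$ and $\cG(k,\sG)$ are closed in $\cZ$ (subbundles of isotropic subbundles are isotropic), and apply the regular-immersion fact \emph{twice}, saving $m\binom{k}{r}$ equations for each factor and paying one saving back when returning to $\cG(k,\sE)$. Your version is symmetric in $\sF$ and $\sG$ and avoids introducing $GAG(t,\sE,\underline{\langle,\dots,\rangle})$ and the incidence correspondence altogether, while resting on exactly the same two pillars: Lemma \ref{lem:tangent-description} applied at $z$, and the \cite{b-l-r} fact that a closed immersion of schemes smooth over a common base is cut out locally by relative-codimension-many equations.

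One justification needs repair: ``$\cZ$ is smooth, hence locally irreducible, at $z$'' is not a valid implication --- smoothness over a base whose local ring at the image point is reducible (which is allowed here, since $\cX$ is arbitrary of finite type over $S$) does not yield local irreducibility, and the reducibility of the fibers of $\cZ$ itself is also an issue --- so the additivity of codimension in the direction you need (an \emph{upper} bound on $\codim$ in $\cG(k,\sE)$ in terms of $\codim$ in $\cZ$) requires per-component bookkeeping as stated. The cleanest fix stays entirely within your framework: $\cZ$ is itself cut out in $\cG(k,\sE)$, locally near $z$, by $m\binom{k}{r}$ equations (either by the very construction of the generalized alternating Grassmannian as the vanishing locus of the $m$ forms on $\bigwedge^r$ of the universal subbundle, or by a third application of the regular-immersion fact, using smoothness of $\cZ$ over the base at $z$). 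Concatenating these with your $k(2n-s-t)-2m\binom{k}{r}$ equations exhibits $\cG(k,\sF\cap\sG)$ as cut out in $\cG(k,\sE)$, locally at $z$, by $k(2n-s-t)-m\binom{k}{r}$ equations, and a single application of Krull's height theorem in the local ring of $\cG(k,\sE)$ at a generic point of any component through $z$ gives the stated bound directly, with no additivity statement needed at that step.
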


\begin{proof}
We first reduce from the case of an algebraic stack $\cX$ to the case of
a scheme $X$ by letting $X \to \cX$ be a smooth cover, and pulling back
the bundles, as in the argument for Corollary 3.7 of \cite{os16}. Then
$X$ is of finite type over $S$, and hence universally catenary.

Now, we can realize $G(k,\sF \cap \sG)$ as follows: note that
$G(k,\sF)$ is smooth over $S$, and has pure codimension $k(n-s)$ everywhere
in $G(k,\sE)$. Because $\sF$ and $\sG$ are isotropic for the
$\langle ,\dots,\rangle_i$, the universal subbundle on $G(k,\sF)$, together
with the pullback of $\sG$, induce a morphism
$$G(k,\sF) \to
GAG(k,\sE,\underline{\langle,\dots,\rangle}) \times_S 
GAG(t,\sE,\underline{\langle,\dots,\rangle}).$$
Denote the latter product by $P$, and let $I \subseteq P$ be the closed
subscheme determined by the incidence correspondence. Then $G(k,\sF\cap \sG)$ 
is precisely the preimage of the incidence correspondence, so because $X$
is universally catenary it suffices to
show that $I$ is cut out locally at $x$ by $k(n-t)-m\binom{k}{r}$ equations
inside $P$. But we can construct $I$ as a relative Grassmannian of subbundles
of the universal bundle on the second factor 
$GAG(t,\sE,\underline{\langle,\dots,\rangle})$;
thus, $I$ is smooth over $GAG(t,\sE,\underline{\langle,\dots,\rangle})$ of relative
dimension $k(t-k)$. On the other hand, by hypothesis and Lemma 
\ref{lem:tangent-description} we have that $P$ is smooth over 
$GAG(t,\sE,\underline{\langle,\dots,\rangle})$ of relative dimension 
$k(n-k)-m\binom{k}{r}$ at the point $z$ corresponding to $V$.
Thus, by Proposition 2.2.7 of \cite{b-l-r}, we have that locally near 
$z$, the scheme $I$ is cut out by $k(n-t)-m\binom{k}{r}$ equations, as 
desired.
\end{proof}

We now consider in some special cases what it means for \eqref{eq:kermap}
to have a nontrivial kernel.
We observe that one way in which \eqref{eq:kermap} can fail to be
injective is if there is some $r$-dimensional subspace $W \subseteq V$
and some nonzero $K$-linear combination of the $\langle ,\dots,\rangle_i|_x$
for which $W$ is degenerate. In \cite{os16},
we saw that the converse holds when $r=2$ and $m=1,2$. However,
the converse does not hold in general. Nonetheless, we now observe that the
converse holds in two other situations, as follows.

\begin{prop}\label{prop:gag-crit} Let $E$ be a $K$-vector space,
$\underline{\langle,\dots,\rangle}$ an $m$-dimensional space of
$r$-linear alternating forms on $E$,
and $V \subseteq E$ an $k$-dimensional subspace.
Suppose either that $k=r$, or that $k=r+1$ and $m=1$. 
Then the map
\begin{equation}\label{eq:kermap2}
(\bigwedge^r V)^{\oplus m} \to \Hom(V,E/V)^*
\end{equation}
induced as in \eqref{eq:kermap} is injective if and only if
there is no nonzero 
$\langle,\dots,\rangle \in \underline{\langle,\dots,\rangle}$
which is degenerate on an $r$-dimensional subspace of $V$.
\end{prop}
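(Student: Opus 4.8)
The forward direction is essentially the observation recorded just before the statement: if $\langle,\dots,\rangle = \sum_i c_i\langle,\dots,\rangle_i$ is a nonzero form degenerate on an $r$-dimensional subspace $W\subseteq V$ with basis $w_1,\dots,w_r$, then the tuple $(c_1\,\omega,\dots,c_m\,\omega)$ with $\omega = w_1\wedge\dots\wedge w_r$ is a nonzero element of the kernel of \eqref{eq:kermap2}. My plan is therefore to concentrate on the converse, and the central computation I would isolate first is the following: for any $r$-dimensional $W\subseteq V$ with basis $w_1,\dots,w_r$ and any scalars $c_1,\dots,c_m$, the tuple $(c_1\,\omega,\dots,c_m\,\omega)$ lies in the kernel of \eqref{eq:kermap2} if and only if $\langle,\dots,\rangle := \sum_i c_i\langle,\dots,\rangle_i$ is degenerate on $W$. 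Unwinding the definition, the image of this tuple sends $\vp$ to $\sum_{j=1}^r\langle w_1,\dots,\vp(w_j),\dots,w_r\rangle$; using isotropy of $V$ (hence of $W$) each such value is independent of the chosen lift of $\vp(w_j)$ to $E$, and feeding in homomorphisms $\vp$ supported on a single $w_j$ shows that vanishing for all $\vp$ is equivalent to $\langle w_1,\dots,\hat{w_j},\dots,w_r,u\rangle = 0$ for all $j$ and all $u\in E$, which is exactly degeneracy of $\langle,\dots,\rangle$ on $W$.

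With this in hand, the case $k=r$ is immediate: then $\bigwedge^r V$ is one-dimensional and spanned by $\omega = v_1\wedge\dots\wedge v_r$ for a basis of $V$, so every element of $(\bigwedge^r V)^{\oplus m}$ has the form $(c_1\,\omega,\dots,c_m\,\omega)$, and $V$ is the unique $r$-dimensional subspace of itself. The central computation then identifies the kernel of \eqref{eq:kermap2} with the space of degenerate forms in $\underline{\langle,\dots,\rangle}$, and since the $\langle,\dots,\rangle_i$ form a basis of the $m$-dimensional space $\underline{\langle,\dots,\rangle}$, a nonzero kernel element corresponds precisely to a nonzero degenerate form on $V$.

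For the case $k=r+1$, $m=1$, the extra ingredient I would invoke is the standard fact that every nonzero element of $\bigwedge^r V$ is decomposable when $\dim V = r+1$. A nonzero kernel element is then a single $\eta\in\bigwedge^r V$ of the form $w_1\wedge\dots\wedge w_r$, determining an $r$-dimensional subspace $W=\spn(w_1,\dots,w_r)\subseteq V$, and the central computation (with $m=1$) shows that $\langle,\dots,\rangle$ is degenerate on $W$, as desired.

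The step that does the real work is the decomposability input, and it is also where I expect the obstruction to lie for any attempt to weaken the hypotheses: it is exactly what forces a kernel element to single out one common $r$-dimensional $W$ on which a single form degenerates. When $k=r$ decomposability is automatic because $\bigwedge^r V$ is a line, and when $k=r+1$, $m=1$ it holds because $\dim V = r+1$; but once $m>1$ and $k>r$ a kernel element is a tuple $(\eta_1,\dots,\eta_m)$ with no reason for the $\eta_i$ to be decomposable along a common subspace, which is precisely why the converse fails in general. The remaining points---tracking signs in the reordering and checking that the form values descend to $E/V$ via isotropy---are routine.
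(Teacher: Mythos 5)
Your proof is correct and takes essentially the same approach as the paper's: the forward direction is the observation recorded before the statement, and the converse is proved by evaluating against homomorphisms $\vp$ supported at a single basis vector (using isotropy of $V$ to work with lifts), combined with one-dimensionality of $\bigwedge^r V$ when $k=r$ and decomposability of all elements of $\bigwedge^r V$ when $k=r+1$, $m=1$. Your packaging of the ``central computation'' as a two-sided equivalence handling both cases uniformly is a slightly cleaner organization of the same argument, and your closing remark on why it breaks down for $m>1$, $k>r$ matches the paper's observation that the converse fails in general.
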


\begin{proof} As remarked above, if some nonzero
$\langle,\dots,\rangle \in \underline{\langle,\dots,\rangle}$ is degenerate
on an $r$-dimensional subspace of $V$, then
\eqref{eq:kermap2} fails to be injective much more generally. 

Conversely, first suppose $k=r$, and let
$\langle,\dots,\rangle_i$ for $i=1,\dots,m$ be a basis for 
$\underline{\langle,\dots,\rangle}$ and
$v_1,\dots,v_r$ a basis for $V$. Then
$\bigwedge^r V$ is $1$-dimensional, with basis $v_1 \wedge \dots \wedge v_r$.
A nonzero element of the kernel of \eqref{eq:kermap2} may thus be written as
as $\sum_i c_i(v_1 \wedge \dots \wedge v_r)_i$, where the subscript $i$
denotes the $i$th place in the direct sum, and not all $c_i$ are $0$.
By definition, this means that for all $\vp \in \Hom(V,E/V)^*$, we
have 
$$\sum_{i=1}^m \sum_{j=1}^r c_i \langle v_1,\dots,\vp(v_j),\dots,v_r\rangle_i
=0.$$
Since we may choose $\vp(v_j)=0$ for all but one $j$, and $\vp(v_j)$
arbitrary for the remaining index, this implies that the span of any
$r-1$ of the $v_j$ is degenerate for
$\sum_{i=1}^m c_i \langle,\dots,\rangle_i$. We thus conclude that $V$
is likewise degenerate, proving the desired statement.

On the other hand, if $m=1$ and $k=r+1$, it is still true that every
nonzero element of $\bigwedge^r V$ is of the form 
$v_1 \wedge \dots \wedge v_r$ for some linearly independent $v_i \in V$,
so an element of the kernel of \eqref{eq:kermap2} is simply of the form
$v_1 \wedge \dots \wedge v_r$, and arguing as above we conclude that the
span of the $v_i$ is degenerate, as desired.
\end{proof}

The following proposition uses a variant approach to treat some additional
cases when $r=3$ and $m=1$, as in Case (III) of Theorem \ref{thm:main-1}.

\begin{prop}\label{prop:gag-crit-2} Let $E$ be a $K$-vector space,
$\langle,,\rangle$ a $3$-linear alternating form on $E$,
and $V \subseteq E$ a $k$-dimensional subspace, with $k \leq 6$.
Then the map
\begin{equation}\label{eq:kermap3}
\bigwedge^3 V \to \Hom(V,E/V)^*
\end{equation}
induced as in \eqref{eq:kermap} is injective if 
there is no $2$-dimensional subspace $W \subseteq V$ on which
$\langle,,\rangle$ is degenerate.
\end{prop}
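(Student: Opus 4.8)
The plan is to recast \eqref{eq:kermap3} as a single composite of multilinear-algebra maps, and then reduce the desired injectivity to a classification statement about alternating $3$-vectors.

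First I would dualize the target. Since $\Hom(V,E/V)^* \cong V \otimes (E/V)^*$, and since $V$ is isotropic, the form induces a well-defined map $\psi \colon \bigwedge^2 V \to (E/V)^*$ sending $a \wedge b$ to $\langle a,b,-\rangle$; this descends to $E/V$ precisely because $\langle a,b,c\rangle = 0$ for $a,b,c \in V$. A $2$-dimensional subspace $W = \spn\{w_1,w_2\}$ is degenerate for $\langle,,\rangle$ exactly when $\psi(w_1\wedge w_2)=0$, so the hypothesis says that $\ker\psi$ contains no nonzero decomposable $2$-vector. Next I would identify \eqref{eq:kermap3} with the composite $\Psi = (\id_V \otimes \psi)\circ\delta$, where $\delta \colon \bigwedge^3 V \to V\otimes\bigwedge^2 V$ is the comultiplication $v_1\wedge v_2\wedge v_3 \mapsto v_1\otimes(v_2\wedge v_3) - v_2\otimes(v_1\wedge v_3) + v_3\otimes(v_1\wedge v_2)$; unwinding the definition in \eqref{eq:kermap} (using $\langle\varphi(v_1),v_2,v_3\rangle = \psi(v_2\wedge v_3)(\varphi(v_1))$) shows the two maps agree. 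Writing $\delta(\omega)=\sum_p e_p\otimes\iota_{e_p^*}(\omega)$ in a basis, one checks that $\delta$ is injective (the coefficient of $e_i\otimes(e_j\wedge e_l)$ with $i<j<l$ recovers that of $e_i\wedge e_j\wedge e_l$ in $\omega$, in any characteristic), and that $\Psi(\omega)=0$ if and only if every contraction $\iota_\xi(\omega)$ lies in $\ker\psi$.

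With this reformulation the proposition reduces to a single claim: for $0 \neq \omega \in \bigwedge^3 V$ with $\dim V \le 6$, the contraction space $C(\omega) := \spn\{\iota_\xi(\omega) : \xi \in V^*\}$ contains a nonzero decomposable $2$-vector. Indeed, if \eqref{eq:kermap3} were not injective I would take a nonzero $\omega$ in its kernel; then $C(\omega)$ is a nonzero subspace of $\ker\psi$ (nonzero by injectivity of $\delta$), and any decomposable element of $C(\omega)$ would be a forbidden degenerate $2$-plane. To prove the claim I would argue by the classical classification of alternating $3$-vectors in dimension at most $6$. Because $C(g\omega)=g\cdot C(\omega)$ and $\GL(V)$ preserves decomposability, the claim is constant on $\GL(V)$-orbits, so it suffices to verify it on one representative of each orbit. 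Over an algebraically closed field there are only finitely many nonzero orbits, of rank (i.e.\ dimension of support) $3$, $5$, or $6$ --- ranks $1,2,4$ being impossible for $3$-vectors --- and for each standard representative a single coordinate contraction is already a nonzero decomposable; for instance $\iota_{e_1^*}(e_1\wedge e_2\wedge e_3 + e_4\wedge e_5\wedge e_6) = e_2\wedge e_3$, with the same phenomenon for the rank-$3$, rank-$5$, and the remaining rank-$6$ representatives.

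I expect the rank-$6$ case to be the main obstacle. A general $6$-dimensional linear subspace of $\bigwedge^2 V$ misses the cone of decomposables entirely (the expected dimension of the intersection is negative), so the claim genuinely uses the special geometry of contraction spaces, and the finiteness of the classification is what makes it tractable; this is exactly why the bound $k \le 6$ appears. A second delicate point is the field of definition: the claim can fail over a non-closed field, since a rank-$6$ trivector twisted by a quadratic extension (becoming $e_1\wedge e_2\wedge e_3 + e_4\wedge e_5\wedge e_6$ over $\bK$ with the two $3$-spaces conjugate) has a contraction space with no rational decomposable. Thus the argument --- and the application --- must be understood at geometric points, where $K$ is algebraically closed. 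With that understood, the orbit-by-orbit verification completes the proof.
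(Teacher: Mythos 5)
Your proof is correct over an algebraically closed field, and at its core it is the same as the paper's: the paper's key step is that for $\dim V \leq 6$ every $\omega \in \bigwedge^3 V$ can be written in a suitable basis as $v_1 \wedge v_2 \wedge v_3 + (\text{terms not involving } v_1)$, deduced from the classification of $\GL(V)$-orbits of trivectors, and this normal form is literally equivalent to your claim that the contraction space $C(\omega)$ contains a nonzero decomposable element (if $\iota_\xi\omega = v_2 \wedge v_3 \neq 0$ then $v_2,v_3 \in \ker\xi$, and choosing $v_1$ with $\xi(v_1)=1$ one checks $\omega - v_1\wedge v_2 \wedge v_3 \in \bigwedge^3 \ker\xi$). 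Your factorization $\Psi = (\id_V \otimes \psi)\circ \delta$ and the paper's explicit choice of $\varphi$ vanishing on $v_2,\dots,v_k$ are the same linear algebra in different packaging, and both proofs take the orbit classification as the essential input.

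Where you genuinely depart from the paper is the field-of-definition caveat, and there you are right while the paper's statement is too strong: Proposition \ref{prop:gag-crit-2} is asserted over an arbitrary field $K$, but the normal form (equivalently, your contraction claim) fails for a rank-$6$ trivector twisted by a quadratic extension, exactly as you say. Moreover, this refutes the proposition as stated, not merely the method: take $V = K^6$ carrying such a twisted $\omega$, set $Q = \bigwedge^2 V/C(\omega)$ and $E = V \oplus Q^*$, and let $\langle,,\rangle$ be zero on every summand of $\bigwedge^3 E$ except $\bigwedge^2 V \otimes Q^*$, where it is the tautological pairing. Then $V$ is isotropic, $\ker\psi = C(\omega)$ contains no $K$-rational decomposable vector, so no $2$-dimensional subspace of $V$ is degenerate, yet $\omega$ is a nonzero element of the kernel of \eqref{eq:kermap3}. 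The repair is the one you propose: injectivity of \eqref{eq:kermap3} is unaffected by extension of the base field, so the proposition holds verbatim over $\overline{K}$, and for non-closed $K$ the hypothesis must be imposed geometrically, on $2$-dimensional subspaces of $V_{\overline{K}}$; that strengthening should then be carried through hypothesis (III) of Corollary \ref{cor:iso-intersect} and of Theorems \ref{thm:main-1} and \ref{thm:main-families}, which is harmless for the conclusions since dimension is insensitive to base field extension. So your write-up does not prove the proposition exactly as printed, but that is because the printed statement is false; what you prove is the correct statement, by the paper's own method plus the missing care about rationality.
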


\begin{proof} The significance of the restriction to $r=3$ and $k \leq 6$ is
that for any element of $\bigwedge^3 V$, there exists a basis $v_1,\dots,v_k$
of $V$ such that the given element may be expressed as 
$$v_1 \wedge v_2 \wedge v_3 + \text{(terms not involving $v_1$)}.$$
Indeed, this follows from the classification of $\GL(V)$ orbits of
$\bigwedge^3 V$ as described for instance in \S 1.4 and \S 2.2 of \cite{re1}.
Now, suppose that \eqref{eq:kermap3} is not injective, and choose a basis of
$V$ so that an element of the kernel has the above form. Then we can choose
$\vp \in \Hom(V,E/V)^*$ sending $v_i$ to $0$ for $i>0$, and $v_1$ to an
arbitrary element of $E$. By definition of \eqref{eq:kermap3}, we see that
$\langle v,v_2,v_3 \rangle = 0$ for all $v \in E$, or equivalently, 
that $\spn(v_2,v_3)$ is degenerate for $\langle,,\rangle$. We thus 
conclude the statement of the proposition.
\end{proof}

Putting together Propositions \ref{prop:iso-intersect}, \ref{prop:gag-crit},
and \ref{prop:gag-crit-2}, we immediately obtain the following corollary:

\begin{cor}\label{cor:iso-intersect} Suppose $\sE$ is a vector bundle
of rank $n$ on an algebraic stack $\cX$ of finite type over a
universally catenary scheme $S$,
and $\langle,\dots,\rangle_i$
for $i=1,\dots,m$ are alternating $r$-linear forms on $\sE$. Let $\sF$ and
$\sG$ be subbundles of $\sE$ of ranks $s$ and $t$, both isotropic with
respect to all of the $\langle,\dots,\rangle_i$. Let
$\cG(k,\sF\cap\sG)$ denote the closed substack of $\cG(k,\sE)$ parametrizing
rank-$k$ subbundles of $\sE$ contained in both $\sF$ and $\sG$.
Suppose that for some $x \in \cX$, we have $V \subseteq \sF|_x \cap \sG|_x$
satisfying one of the following conditions:
\begin{Ilist}
\itm $k=r$, and the subspace $V$ is not degenerate
for any nonzero linear combination $\langle,\dots,\rangle$ of the
$\langle,\dots,\rangle_i|_x$;
\itm $k =r+1$, $m=1$, and no $r$-dimensional subspace of $W \subseteq V$
is degenerate for $\langle,\dots,\rangle_1$;
\itm $r=3$, $k=5 \text{ or }6$, $m=1$, and no $2$-dimensional subspace 
$W \subseteq V$ is degenerate for $\langle,\dots,\rangle_1$.
\end{Ilist}
Then every component of $\cG(k,\sF\cap\sG)$ passing
through the point corresponding to $V$ has codimension at
most
$$k(2n-s-t)-m\binom{k}{r}$$
in $\cG(k,\sE)$.
\end{cor}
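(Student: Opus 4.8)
The plan is to recognize that Corollary \ref{cor:iso-intersect} is a direct combination of the three preceding results, so the proof is essentially a matter of verifying that the hypotheses of each proposition match up correctly. The strategy is to reduce each of the three cases (I), (II), (III) to an application of Proposition \ref{prop:iso-intersect}, where the only thing that needs checking is that the stated nondegeneracy condition guarantees injectivity of the map \eqref{eq:kermap} at the point $V$.

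First I would observe that Proposition \ref{prop:iso-intersect} already gives exactly the desired codimension bound $k(2n-s-t)-m\binom{k}{r}$, provided its sole nontrivial hypothesis is met: that the map \eqref{eq:kermap} is injective for the given $V \subseteq \sF|_x \cap \sG|_x$. So the entire content of the corollary is translating each of the three combinatorial nondegeneracy conditions into this injectivity. For cases (I) and (II), this translation is furnished by Proposition \ref{prop:gag-crit}: in case (I), with $k=r$, injectivity of \eqref{eq:kermap2} is equivalent to the nonexistence of a nonzero form in the span of the $\langle,\dots,\rangle_i|_x$ that is degenerate on an $r$-dimensional subspace of $V$; but since $k=r$, the only $r$-dimensional subspace of $V$ is $V$ itself, so this is precisely the stated condition that $V$ is not degenerate for any nonzero linear combination. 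In case (II), with $k=r+1$ and $m=1$, Proposition \ref{prop:gag-crit} again gives the equivalence between injectivity and the absence of an $r$-dimensional degenerate subspace $W \subseteq V$, which is exactly hypothesis (II).

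For case (III), with $r=3$, $k=5$ or $6$, and $m=1$, I would instead invoke Proposition \ref{prop:gag-crit-2}, whose hypothesis $k \leq 6$ is satisfied. That proposition gives a one-directional implication: if no $2$-dimensional subspace $W \subseteq V$ is degenerate for $\langle,,\rangle_1$, then \eqref{eq:kermap3} is injective. This is precisely the implication needed, since we only require injectivity as an input to Proposition \ref{prop:iso-intersect}, not an equivalence. Thus hypothesis (III) feeds directly into the required injectivity.

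Having checked injectivity in all three cases, I would then simply apply Proposition \ref{prop:iso-intersect} to conclude the codimension bound. I do not anticipate a genuine obstacle here, since the real mathematical work has already been carried out in the three propositions; the only point requiring care is the bookkeeping in case (I), where one must note that $k=r$ forces the degeneracy condition on an $r$-dimensional subspace to coincide with degeneracy of $V$ itself, and the observation that in case (III) only the forward implication of Proposition \ref{prop:gag-crit-2} is available (and needed). The proof is therefore short: for each case, cite the relevant criterion to obtain injectivity of \eqref{eq:kermap}, and then apply Proposition \ref{prop:iso-intersect}.
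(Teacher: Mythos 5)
Your proposal is correct and is exactly the paper's approach: the paper gives no separate proof, stating only that the corollary is obtained immediately by putting together Propositions \ref{prop:iso-intersect}, \ref{prop:gag-crit}, and \ref{prop:gag-crit-2}, which is precisely your argument. Your two points of care—that in case (I) the condition on $r$-dimensional subspaces collapses to degeneracy of $V$ itself since $k=r$, and that case (III) requires only the one-directional implication of Proposition \ref{prop:gag-crit-2}—are exactly the bookkeeping the paper leaves implicit.
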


\section{Application to vector bundles on curves}

We consider the following situation.
Let $S$ be a scheme, $\cX$ an algebraic $S$-stack, and $\pi:\cC \to \cX$ a 
smooth, projective relative curve of genus $g$ over $\cX$. Let $\sL$ be a 
line bundle on $\cC$ of relative degree $d$, 
and $\sE$ a vector bundle of rank $r$, together with an isomorphism
$\psi:\det \sE \risom \sL$. 

We describe how to construct $r$-linear alternating forms on 
$\pi_* (\sE(D)/\sE(-(r-1)D))$.

\begin{prop}\label{prop:form-construct} In the above situation, suppose
we also have a morphism $\xi:\sL \to \Omega^1_{\cC/\cX}$, 
and $P_1,\dots,P_N:\cX \to \cC$ disjoint sections of $\pi$,
and set $D =\sum_i P_i$. Then we construct an alternating $r$-linear form 
$\langle ,\dots,\rangle_{\xi}$ on $\pi_* (\sE(D)/\sE(-(r-1)D))$ defined 
locally on $\cX$ by
$$\langle s_1,\dots,s_r\rangle=\sum_{i}^n 
\res_{P_i} (\xi \circ \psi)(\tilde{s}_1 \wedge \dots \wedge \tilde{s}_r),$$
where each $\tilde{s}_j$ is a representative in $\sE(D)$ of $s_j$ in a
neighborhood of $P_i$ (more precisely, in a neighborhood of the point
of $P_i$ lying over a given point of $\cX$). Moreover, this form is
compatible with base change.
\end{prop}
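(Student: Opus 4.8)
The plan is to take the displayed formula as the definition and verify, in order, that it makes sense, that it is independent of the auxiliary choices (hence descends to the quotient and glues to a global form), and that it commutes with base change. I first check that the residue expression is meaningful. Given local sections $s_1,\dots,s_r$ of $\pi_*(\sE(D)/\sE(-(r-1)D))$ and local lifts $\tilde s_j\in\sE(D)$ in a neighborhood of the sections, the wedge $\tilde s_1\wedge\dots\wedge\tilde s_r$ lies in $\bigwedge^r\sE(D)=(\det\sE)(rD)$, using that $\det(\sE\otimes\sO_\cC(D))=(\det\sE)\otimes\sO_\cC(D)^{\otimes r}$ for $\sE$ of rank $r$. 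The isomorphism $\psi$ identifies this with $\sL(rD)$, and $\xi$ then carries it into $\Omega^1_{\cC/\cX}(rD)$, i.e.\ a relative differential with poles only along $D$. Hence $\res_{P_i}$ of it is a well-defined element of $\sO_\cX$ (the relative residue along the section $P_i$), and summing over $i$ yields an $\sO_\cX$-valued quantity that is alternating and $r$-linear in the $\tilde s_j$, since $\wedge$ is alternating and multilinear while $\xi\circ\psi$ and each $\res_{P_i}$ are linear.

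The key step, and the place where the specific twist $-(r-1)D$ is forced, is independence of the lifts. By multilinearity it suffices to replace $\tilde s_1$ by $\tilde s_1+e$ with $e\in\sE(-(r-1)D)$; the expression then changes by the residue sum of $(\xi\circ\psi)(e\wedge\tilde s_2\wedge\dots\wedge\tilde s_r)$. Now $e$ vanishes to order at least $r-1$ along $D$, while each of the remaining $r-1$ factors has a pole of order at most $1$ along $D$, so the orders exactly cancel and $e\wedge\tilde s_2\wedge\dots\wedge\tilde s_r$ lies in $(\det\sE)\bigl(-(r-1)D+(r-1)D\bigr)=\det\sE\cong\sL$. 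It is therefore a regular section near each $P_i$, and applying $\xi$ produces a holomorphic relative differential, whose residue vanishes. Thus the extra term contributes $0$, the quantity is independent of the chosen lifts, and it descends to a well-defined alternating $r$-linear form on $\pi_*(\sE(D)/\sE(-(r-1)D))$. Because the value does not depend on the lifts, the forms defined over an open cover of $\cX$ agree on overlaps and glue to a global form $\langle,\dots,\rangle_\xi$. I expect this pole-order bookkeeping to be the only real content of the argument.

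The remaining assertion, compatibility with base change, is then formal. The sheaf $\sE(D)/\sE(-(r-1)D)$ is supported on the sections $P_i$ and is finite and flat over $\cX$ (locally in a relative coordinate $t$ it is a free $\sO_\cX$-module with basis coming from $t^{-1},1,\dots,t^{r-2}$ tensored with $\sE$), so its higher direct images vanish and its pushforward is locally free and commutes with base change. The data $\psi$, $\xi$, and the relative residue maps $\res_{P_i}$ all pull back along any morphism $\cX'\to\cX$, and formation of the wedge product commutes with pullback. Since $\langle,\dots,\rangle_\xi$ is built entirely out of these operations, the pulled-back form coincides with the form constructed from the pulled-back data, which is precisely the claimed base-change compatibility.
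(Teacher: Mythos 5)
Your proposal is correct and follows essentially the same route as the paper: the paper's proof simply cites the first half of Lemma 5.1 of \cite{os16} and highlights exactly the point you verify in detail, namely that the twist by $-(r-1)D$ is what makes the wedge of a lift-ambiguity term with the remaining $r-1$ sections of $\sE(D)$ regular along $D$, so its residue vanishes and the form is independent of lifts. Your pole-order bookkeeping and the finite-flat argument for base-change compatibility are precisely the content the paper delegates to that reference.
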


\begin{proof} The argument is largely the same the first half of the
proof of Lemma 5.1 of \cite{os16}. The main distinction is that we are
forced to use $\sE(-(r-1)D))$ as the appropriate generalization of
$\sE(-D)$, to ensure that if we take a wedge product with $r-1$ local
sections of $\sE(P_i)$, the result will still be regular at $P_i$, and
thus will have residue equal to $0$.
\end{proof}

Note that for $r>2$, the form constructed in Proposition 
\ref{prop:form-construct} is highly degenerate: in particular, the subbundle
$\pi_* (\sE(-D)/(\sE(-(r-1)D)))$ is always degenerate. Nonetheless, we see
that we can make these forms behave in a rather nondegenerate manner when
we restrict our attention to their values on global sections.

\begin{prop}\label{prop:subspace-injective} In the situation of Proposition
\ref{prop:form-construct}, given a field $K$ and
a $K$-valued point $x$ of $\cX$, and
$$V \subseteq \Gamma(\cC|_x, \sE|_x)$$
a $k$-dimensional space of global sections of $\sE|_x$, suppose that $\xi$
is not identically zero on the fiber over $x$, and that for some $n \leq k$,
we have that no $n$-dimensional subspace of $V$ is contained in a subbundle
of $\sE|_x$ of rank $r-2$. 

Then there exists some $N>0$ such that for all $N'>N$, and any
disjoint sections $P_1,\dots,P_{N'}$ of $\pi$, we have that 
the form $\langle ,\dots,\rangle_{\xi}$ has the property that no
subspace $W \subseteq V$ of dimension $n$ is degenerate for 
$\langle ,\dots,\rangle_{\xi}|_x$.
\end{prop}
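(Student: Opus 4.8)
The plan is to reduce the nondegeneracy statement about the form $\langle,\dots,\rangle_\xi$ to the hypothesis that no $n$-dimensional subspace of $V$ lies in a rank-$(r-2)$ subbundle, by examining what degeneracy of a subspace $W$ actually forces. Recall that $W \subseteq V$ of dimension $n$ is degenerate for $\langle,\dots,\rangle_\xi$ precisely when the induced map $\bigwedge^{n-1} W \to (\pi_*(\sE(D)/\sE(-(r-1)D))|_x)^*$ vanishes, i.e.\ when for every choice of $n-1$ vectors $w_1,\dots,w_{n-1} \in W$ and every local section $s$, the sum of residues $\sum_i \res_{P_i}(\xi \circ \psi)(\tilde{s} \wedge \tilde{w}_1 \wedge \dots \wedge \tilde{w}_{n-1})$ is zero. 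The first step is to unwind this in terms of the geometry of the sections $w_1, \dots, w_{n-1}$, viewed as genuine global sections of $\sE|_x$, together with the residue pairing coming from $\xi$.

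First I would fix a degenerate $W$ and the corresponding $w_1, \dots, w_{n-1}$, and translate the vanishing of all residues into a pointwise condition at each $P_i$. Since $\xi \circ \psi$ gives a map $\det\sE|_x = \sL|_x \to \Omega^1_{\cC/\cX}|_x$, contracting the $(r-1)$-fold wedge $w_1 \wedge \dots \wedge w_{n-1}$ against the remaining slots produces, at each point $P$, a linear functional on the fiber $\sE|_P$ valued in the appropriate twist; vanishing of residues for all test sections $\tilde s$ at $P_i$ should force this functional to vanish at each $P_i$ (here we use that $\xi$ is not identically zero, so that its divisor of zeros is a proper closed subset and the residue pairing is nondegenerate away from it). The key algebraic observation is that the $(r-1)$-fold contraction $w_1 \wedge \dots \wedge w_{r-1}$ being zero in $\bigwedge^{r-1}\sE|_P$ is exactly the statement that the $w_j$ span a subspace of dimension $< r-1$ in the fiber, equivalently that they fail to be in general position; more precisely, degeneracy is governed by the locus where the sections $w_1, \dots, w_{n-1}$ together with the image of $\sE$ fail to fill out enough of the fiber, which is controlled by a rank-$(r-2)$ condition.

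The crux of the argument, and where I expect the main obstacle, is the passage from finitely many pointwise conditions at the $P_i$ to a global subbundle statement. The strategy is the standard ``enough points'' maneuver: the subspace $W$ being degenerate for $\langle,\dots,\rangle_\xi$ imposes vanishing only at the chosen points $P_1, \dots, P_{N'}$, whereas the hypothesis we want to contradict is a global one about subbundles of $\sE|_x$. I would argue that if $W$ were degenerate for arbitrarily large collections of points, then the associated map $\bigwedge^{n-1} W \otimes \sE|_x \to (\text{sheaf twisted by }\xi)$ would vanish on a Zariski-dense set of points of $\cC|_x$, hence identically; by the curve being irreducible this upgrades the finite pointwise vanishing to the vanishing of a global morphism of sheaves. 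The delicate point is to ensure the bound $N$ can be chosen uniformly, i.e.\ depending only on the numerical data (the degrees and ranks involved) and not on the particular $W$: this requires bounding the degree of the relevant torsion or cokernel sheaf so that vanishing at more than that many points forces identical vanishing. Once identical vanishing is established, I would interpret it as saying that the $w_j$ generically span at most an $(r-2)$-dimensional subbundle, so that the saturation of the span of $W$ inside $\sE|_x$ is a subbundle of rank $\leq r-2$ containing an $n$-dimensional subspace of $V$, contradicting the hypothesis. Thus for $N'$ large enough no such degenerate $W$ exists, giving the proposition.

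The finiteness of the uniform bound $N$ is really the heart of the matter. Concretely I would produce, for each candidate degenerate $W$, a nonzero global section of a fixed vector bundle on $\cC|_x$ whose vanishing locus contains all the $P_i$; since such a section can vanish at only boundedly many points (bounded by its degree, which in turn is bounded in terms of $d$, $g$, $r$, and $k$ alone), choosing $N$ larger than this bound forces a contradiction once $N' > N$. The compactness of the relevant parameter space of subspaces $W$ (a closed subvariety of a Grassmannian) should let me take the bound uniform over all $W$, completing the argument.
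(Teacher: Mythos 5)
Your proposal is correct and follows essentially the same route as the paper's proof: localize degeneracy at the points $P_i$ via the residue pairing and the direct-sum decomposition, observe that away from the zeros of $\xi$ (of which there are at most $2g-2-d$) degeneracy at $P_i$ forces the sections of $W$ to span a subspace of dimension at most $r-2$ in the fiber of $\sE|_x$ at $P_i$, and then invoke a bound on the number of such rank-drop points that is uniform over all $n$-dimensional $W \subseteq V$ (the paper obtains this from a finite-type argument on the Grassmannian of subspaces of $V$, which is also the mechanism you cite; your degree-bound on wedge sections works too, though the bound depends on $\sE|_x$ itself, not only on $d,g,r,k$ --- which is permitted, since $N$ may depend on the fiber). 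The only slip is notational: degeneracy for the $r$-linear form involves $(r-1)$-fold wedges of elements of $W$, not $(n-1)$-fold ones, as your own later steps correctly use.
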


Note that the statement of the proposition depends only on the fiber of
$\sE$ at $x$; the base stack $\cX$ plays no role.

\begin{proof} First, observe that the Grassmannanian of $n$-dimensional 
subspaces
of $V$ is of finite type, and the loci on which the subspaces have rank
at most $r-2$ is likewise of finite type. It then follows that there is
some $N''$ such that any $n$-dimensional subspace of $V$ can have rank
less than or equal to $r-2$ at at most $N''$ points of $\cC|_x$. Choose 
$N'>N''+2g-2-d$ (note that $2g-2-d\geq 0$ since 
$\pi_* \sHom(\sL,\Omega^1_{\cC/\cX}) \neq 0$).

It is clear that we have the decomposition
$$\pi_* (\sE(D)/\sE(-(r-1)D)) \cong 
\bigoplus _{i=1}^{N'} \pi_*(\sE(P_i)/\sE(-(r-1)P_i)).$$ 
By definition,
the form $\langle ,\dots,\rangle_{\xi}|_x$ is compatible with this
direct sum decomposition, so to show that a subspace is not degenerate,
it suffices to show that there exists some $i$ such that its image
in $\pi_*(\sE(P_i)/\sE(-(r-1)P_i))$ is not degenerate for the restriction
of $\langle ,\dots,\rangle_{\xi}|_x$ to $P_i$. Calculating locally at $P_i$, we 
see moreover that if $P_i$ is not a zero of the map $\xi$, then 
$\langle ,\dots,\rangle_{\xi}|_x$ induces an isomorphism
$$\bigwedge^{r-1} \pi_*(\sE/\sE(-P_i)) \risom \pi_* (\sE(P_i)/\sE)^*.$$
Let $W \subseteq V$ be an $n$-dimensional subspace; then by hypothesis
the restriction of $W$ to $P_i$ is contained in $\pi_*(\sE/\sE(-(r-1)P_i))$,
so by the above isomorphism, to show that $W$ is nondegenerate, it is
enough to see that the map 
$$\bigwedge^{r-1} W \to \bigwedge^{r-1} \pi_*(\sE/\sE(-P_i))$$
is nonzero,
or equivalently, that the sections comprising $W$ span a subspace of
dimension at least $r-1$ in the fiber of $\sE$ at $P_i$.

Now, we can have at most $2g-2-d$ points at which $\xi$ vanishes, and
at most $N''$ points at which the sections of $W$ span a subspace of
$\sE|_{P_i}$ having dimension strictly less than $r-1$, so by construction
there is necessarily some $i$ such that $\bigwedge^{r-1} W$ has nonzero
image in $\bigwedge^{r-1} \pi_*(\sE/\sE(-P_i))$, and we conclude that
$W$ is not degenerate for $\langle ,\dots,\rangle_{\xi}|_x$, as desired.
\end{proof}

We can now prove Theorem \ref{thm:main-1}. Indeed, we prove a more general
form of the theorem, allowing the determinant to vary in families. We
first recall the definition of the moduli stack in question:

\begin{defn}\label{def:mod-stack} Let $S$ be a scheme, and $C/S$ a 
smooth, projective relative curve. Given also, $d,r,k \in \ZZ$, with 
$r \geq 2$, $k \geq 1$, and $\sL \in \Pic^d(C)$, the stack 
$\cG^k_{r,\sL}(C/S)$
parametrizes triples $(\sE,\psi,V)$ over every $S$-scheme $T$, where $\sE$ 
is a vector bundle of rank $r$ on $C_T:= C \times_S T$, 
$\psi:\det \sE \risom \sL|_{C_T}$ is an isomorphism, and $V$ is a rank-$k$ 
subbundle of $p_{2*} \sE$, in the sense that it
is a locally free subsheaf such that for all $T' \to T$, we have that the
induced map $V|_{T'} \to p_{2*}' (\sE|_{C_T'})$ remains injective, where
$p_{2}':C_{T'} \to T'$ is the projection morphism.

In the case that $S$ is the spectrum of a field, we write simply
$\cG^k_{r,\sL}(C)$.
\end{defn}

The construction of $\cG^k_{r,\sL}(C/S)$ as a closed substack of a relative
Grassmannian over the moduli stack of vector bundles of rank $r$ and
determinant $\sL$ proceeds exactly as in the classical rank-$1$ case.

\begin{thm}\label{thm:main-families} Let $S$ be an equidimensional scheme
of finite type over a field,
 and $C$ be a smooth, projective relative curve over $S$ of
genus $g$. Suppose $\sL \in \Pic^d(C)$, and $h^1(C_s,\sL|_s)$ is constant
as $s \in S$ varies, and is at least $m$. Given 
$r \geq 2$, and $s \in S$, let $\sE$ be a vector bundle of rank $r$ on $C_s$ 
with determinant $\sL|_s$, and $V \subseteq H^0(C_s,\sE)$ a $k$-dimensional 
space of global 
sections. Suppose that in addition, one of the following conditions is
satisfied.
\begin{Ilist}
\itm $k=r$, and $V$ is not contained in any subbundle of $\sE$ of 
rank $r-2$.
\itm $k=r+1$, $m=1$, and no $r$-dimensional subspace of $V$ is contained 
in any subbundle of $\sE$ of rank $r-2$.
\itm $r=3$, $k=5 \text{ or }6$, $m=1$, and no $2$-dimensional subspace of 
$V$ is contained in any subbundle of $\sE$ of rank $1$.
\end{Ilist}

Then every component of $\cG^k_{r,\sL}(C/S)$ passing through the point 
corresponding to $(\sE,V)$ has dimension at least 
\begin{equation}
\dim S+\rho-g+m\binom{k}{r}.\end{equation}
\end{thm}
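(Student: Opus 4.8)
The plan is to reduce Theorem \ref{thm:main-families} to a codimension statement about the generalized alternating Grassmannians developed in Section 2, and to produce the requisite alternating forms via the residue construction of Section 3. First I would construct the ambient geometric object: working over $S$, let $\cX$ be the moduli stack of pairs $(\sE,\psi)$ of rank-$r$ bundles on $C$ with fixed determinant $\sL$, so that $\cG^k_{r,\sL}(C/S)$ is the closed substack of a relative Grassmannian $\cG(k,\pi_*(\sE(D)/\sE(-(r-1)D)))$ cut out by the condition that $V$ consists of \emph{global} sections. The key idea, exactly as in \cite{os16}, is to reinterpret ``$V$ lies in $H^0(C_s,\sE)$'' as an isotropy/incidence condition. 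Choosing a nonzero $\xi:\sL\to\Omega^1_{\cC/\cX}$ (which exists since $h^1(C_s,\sL|_s)\geq m>0$, indeed one gets an $m$-dimensional family of such $\xi$), Proposition \ref{prop:form-construct} produces $r$-linear alternating forms $\langle,\dots,\rangle_\xi$ on $\pi_*(\sE(D)/\sE(-(r-1)D))$, and the locus of global sections will be realized as an intersection of the Grassmannian of subbundles lying in the two subbundles $\sF=\pi_*(\sE/\sE(-(r-1)D))$ and $\sG=\pi_*(\sE(D)/\sE)$, both isotropic, with the condition of being isotropic for the chosen $m$-dimensional space of forms.

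Next I would set up the bookkeeping so that Corollary \ref{cor:iso-intersect} applies. Pick a basis $\xi_1,\dots,\xi_m$ of an $m$-dimensional subspace of $H^0(C_s,\sHom(\sL,\Omega^1))$ and the associated forms $\langle,\dots,\rangle_{\xi_1},\dots,\langle,\dots,\rangle_{\xi_m}$; after choosing $N'$ large (so that enough points $P_1,\dots,P_{N'}$ are available), the nondegeneracy hypotheses (I)--(III) of the theorem translate, via Proposition \ref{prop:subspace-injective}, into the corresponding hypotheses (I)--(III) of Corollary \ref{cor:iso-intersect}. Concretely, Proposition \ref{prop:subspace-injective} (with $n=r$ in cases (I),(II) and $n=2$ in case (III)) guarantees that no $n$-dimensional subspace of $V$ is degenerate for each $\langle,\dots,\rangle_{\xi_j}|_x$; in case (I) one needs this simultaneously for every nonzero linear combination, which follows by applying the proposition to the $m$-dimensional space of $\xi$'s at once (the combination $\sum c_j\xi_j$ is again a nonzero section of $\sHom(\sL,\Omega^1)$, so the same vanishing-point count applies). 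This is the step that uses $h^1\geq m$ to supply an $m$-dimensional worth of forms, which is precisely what produces the $m\binom{k}{r}$ improvement.

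With the hypotheses verified, Corollary \ref{cor:iso-intersect} gives that every component of $\cG(k,\sF\cap\sG)$ through the point corresponding to $V$ has codimension at most $k(2n-s-t)-m\binom{k}{r}$ inside $\cG(k,\sE)$, where here $n$ denotes the rank of the ambient bundle $\pi_*(\sE(D)/\sE(-(r-1)D))$ and $s,t$ are the ranks of $\sF,\sG$. I would then perform the dimension accounting: subtract this codimension from $\dim\cG(k,\sE)$ relative to $\cX$, add $\dim\cX=\dim S+(r^2-1)(g-1)$ (the dimension of the fixed-determinant moduli stack over $S$), and check that the ranks $n=N'r$, $s=N'(r-1)$, $t=N'r-(r-1)N'=\dots$ combine so that the $N'$-dependent terms cancel, leaving exactly $\dim S+\rho-g+m\binom{k}{r}$. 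The equidimensionality of $S$ and the universal catenary hypothesis ensure that this codimension bound in the relative Grassmannian yields the asserted \emph{lower} bound on the dimension of each component of $\cG^k_{r,\sL}(C/S)$.

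The main obstacle I anticipate is the final dimension bookkeeping, specifically checking that the auxiliary choices $N'$ and the large rank of the ambient bundle $\pi_*(\sE(D)/\sE(-(r-1)D))$ wash out cleanly, and that ``$V\subseteq H^0$'' is correctly captured by the intersection $\sF\cap\sG$ rather than by a single isotropy condition. In particular, I must verify that a subbundle $V$ lies in both $\sF=\pi_*(\sE/\sE(-(r-1)D))$ and $\sG=\pi_*(\sE(D)/\sE)$ exactly when its sections extend to genuine global sections regular everywhere, which is the residue-pairing analogue of Serre duality used in \cite{os16}; getting the degenerate ``background'' behavior of the $r$-linear form (noted after Proposition \ref{prop:form-construct}) not to interfere with the nondegeneracy on $V$ is the delicate point, but it is exactly what Proposition \ref{prop:subspace-injective} is designed to handle.
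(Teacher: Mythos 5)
Your overall strategy is the same as the paper's: build $m$ residue forms via Proposition \ref{prop:form-construct}, verify the degeneracy hypotheses via Proposition \ref{prop:subspace-injective}, and feed everything into Corollary \ref{cor:iso-intersect} together with the dimension bookkeeping of \S 5 of \cite{os16}. But there is a genuine gap at the first step, and it concerns exactly the hypothesis your proposal never invokes: the \emph{constancy} of $h^1(C_s,\sL|_s)$. You justify the existence of an $m$-dimensional family of morphisms $\xi:\sL\to\Omega^1$ purely from the fiberwise inequality $h^1(C_s,\sL|_s)\geq m$. Over a point this is Serre duality; over the family it is not enough. What you actually need are $m$ sections of the pushforward to $S$ of $\sHom(\sL,\Omega^1_{C/S})$, defined locally on $S$ near the given point $s$, whose restrictions to the fiber $C_s$ remain linearly independent --- otherwise the forms of Proposition \ref{prop:form-construct} are not defined over the stack $\cX$, and Corollary \ref{cor:iso-intersect} (which requires forms on a bundle over all of $\cX$, not just data at the point $x$) cannot be applied to produce the codimension $m\binom{k}{r}$. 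If $h^1$ jumps at $s$, sections of the fiber need not extend to a neighborhood, and in fact the theorem itself fails without the constancy hypothesis: this is the content of Example \ref{ex:nonconstant-fail} and the remark following the theorem. The paper closes this gap by first reducing to $S$ reduced (harmless, since the statement is purely dimension-theoretic), then applying Grauert's theorem together with Serre duality: constancy of $h^1$ makes the pushforward locally free of rank at least $m$ and compatible with base change, so locally on $S$ one may choose $m$ sections that stay linearly independent in every fiber. Your proposal needs this step, and similarly needs the (harmless) \'etale base change that the paper uses to produce the disjoint sections $P_1,\dots,P_{N'}$ over $S$.

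A secondary, repairable error: your second subbundle $\sG=\pi_*(\sE(D)/\sE)$ is not a subbundle of $\pi_*(\sE(D)/\sE(-(r-1)D))$ at all --- $\sE(D)/\sE$ is a \emph{quotient} of $\sE(D)/\sE(-(r-1)D)$, so the intersection $\sF\cap\sG$ as you write it does not parse. The correct second subbundle is the image of $\pi_*(\sE(D))\to\pi_*(\sE(D)/\sE(-(r-1)D))$, i.e.\ restrictions of global sections of $\sE(D)$; it is isotropic by the residue theorem, and for $N'$ large the intersection $\sF\cap\sG$ then consists exactly of restrictions of global sections of $\sE$, which is the identification your final paragraph flags as needing verification. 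With these two repairs your argument coincides with the paper's proof.
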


\begin{proof} First observe that since the statement is purely 
dimension-theoretic, we may assume that $S$ is reduced. Then, by Grauert's
theorem and Serre duality,
the pushforward of $\sHom(\sL,\Omega^1_{C/S})$
is locally free of rank at least $m$, and pushforward commutes with base
change. Since the statement is local on
$S$, we may suppose we have $m$ linearly independent sections of this
pushforward which remain linearly independent under base change, and
we use these together with Proposition \ref{prop:form-construct} to 
construct $m$ alternating forms.
Furthermore, since etale base change does not affect dimension, we may
assume we have disjoint sections $P_1,\dots,P_{N'}$ as in Proposition
\ref{prop:subspace-injective}. Using Corollary \ref{cor:iso-intersect}, the 
argument then proceeds almost identically to
the proof of Theorem 1.3 in \S 5 of \cite{os16}.
In the notation of \textit{loc.\ cit.}, the only difference
is that the sheaves $\sE_{\cU}(D')/\sE_{\cU}(-D')$ and 
$\sE_{\cU}/\sE_{\cU}(-D')$ should be replaced by 
$\sE_{\cU}(D')/\sE_{\cU}(-(r-1)D')$ and 
$\sE_{\cU}/\sE_{\cU}(-(r-1)D')$ respectively, and the resulting ranks and
dimension counts modified appropriately.
\end{proof}

Theorem \ref{thm:main-1} then follows as the special case for which the
base $S$ is a point.

\begin{rem} Note that the condition that $h^1(C,\sL)$ be constant in fibers
is an important one. Without it, not only does the argument fail, but the 
theorem fails as well. See Example \ref{ex:nonconstant-fail} below.
\end{rem}

\begin{rem} The case of varying but special determinants is important
when one wants to study components of the stack $\cG^k_{r,d}(C)$; see
for instance Example \ref{ex:farkas-ortega}, and Example 
\ref{ex:nonconstant-fail}. According to the theorem we 
may also let the curve vary in families, but this seems less important
outside the context of degeneration arguments. 
\end{rem}

\section{The case of rank $2$}

The basic strategy of our analysis in the case of rank $2$ is to carry
out dimension counts via a detailed analysis of the possibilities for
extensions of line bundles. By virtue of Theorem \ref{thm:main-1}, we
will only have to compute upper bounds on dimensions to get the desired
result. 

\begin{defn} Let $\cS_{\sL}$ denote the stack over $\cG^2_{2,\sL}(C)$
consisting of tuples $(\sE,\psi,V,s_1,s_2)$, where $(\sE,\psi,V)$ are as
in Definition \ref{def:mod-stack}, and $s_1,s_2$ are a basis of $V$.
Let $\cS_{\sL}^{\gg}$ denote the open substack obtained as the preimage of 
$\cG^{2,\gg}_{2,\sL}(C)$ in $\cS_{\sL}$. Given $d' \geq 0$, denote by
$\cS^{\gg}_{d',\sL}$ the locally closed substack of $\cS_{\sL}^{\gg}$ on
which $s_1$ vanishes along a divisor of degree $d'$. 
\end{defn}

Then $\cS_{\sL}$ is a $GL_2$-torsor over $\cG^2_{2,\sL}(C)$, and is
in particular smooth of relative dimension $4$ over $\cG^2_{2,\sL}(C)$.
As $d'$ varies, the stacks $\cS^{\gg}_{d',\sL}$ give a stratification of
$\cS^{\gg}_{\sL}$. If $D$ is the divisor of vanishing of $s_1$ for a
point of $\cS^{\gg}_{d',\sL}$, we see that $s_2$ gives a nonzero section
of $\sL(-D)$, so in particular we must have $d' \leq d$.

\begin{prop}\label{prop:exts1} Suppose $d \geq 0$, and $0 \leq d' \leq d$. 
Then 
$$\dim \cS^{\gg}_{d',\sL} \leq 2d+1-g+m-d'.$$

Moreover, $\cS^{\gg}_{0,\sL}$ is irreducible.
\end{prop}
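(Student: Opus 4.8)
The plan is to parametrize the stratum $\cS^{\gg}_{d',\sL}$ explicitly via extension data and count dimensions directly. A point of $\cS^{\gg}_{d',\sL}$ carries a section $s_1$ vanishing on a divisor $D$ of degree $d'$, which gives an inclusion $\cO_C(D) \hookrightarrow \sE$ as a subbundle (subbundle because $s_1$ vanishes on exactly $D$), realizing $\sE$ as an extension
\begin{equation*}
0 \to \cO_C(D) \to \sE \to \sL(-D) \to 0,
\end{equation*}
where the quotient is forced to be $\sL(-D)$ by the determinant condition $\det \sE \cong \sL$. So first I would fix $D$ (a choice living in a $d'$-dimensional family, namely $\Sym^{d'} C$, or rather its image in $\Pic$ together with the section) and analyze the data needed to reconstruct the point: the extension class, and the second section $s_2$.

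The key steps, in order, would be: (1) count the choices of $D$, contributing $d'$; (2) for fixed $D$, the extension $\sE$ is classified by a class in $\Ext^1(\sL(-D),\cO_C(D)) = H^1(C, \cO_C(2D-\sL))$, whose dimension I would compute by Riemann–Roch and duality in terms of $d$, $d'$, $g$, and $h^1(C,\sL)=m$; (3) given $\sE$, the section $s_1$ is determined up to scalar by $D$, while $s_2$ must map to a nonzero section of $\sL(-D)$ in the quotient (nonzero precisely because the bundle is generically generated, i.e.\ we are in the $\gg$ locus), so $s_2$ ranges over a torsor under $H^0(C,\cO_C(D))$ sitting over the $h^0(C,\sL(-D))$-dimensional space of quotient sections; (4) assemble these counts, remembering to subtract for the automorphisms/scalars and for the $GL_2$-action already built into $\cS_\sL$, and verify the total is at most $2d+1-g+m-d'$. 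The $-d'$ on the right should emerge precisely because enlarging $d'$ shrinks $h^0(\sL(-D))$ while only adding $d'$ for the choice of $D$, and these should not fully compensate.

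The main obstacle I anticipate is bookkeeping the cohomology dimensions so that the bound comes out uniformly in $d'$ rather than only generically: the quantities $h^0(C,\sL(-D))$ and $h^1(C,\cO_C(2D-\sL))$ jump on special loci, and I must ensure that wherever one jumps up, the dimension of the corresponding parameter family compensates, so that the stated inequality holds as a genuine upper bound on every stratum and not just its open part. Concretely, I would bound $\dim \Ext^1$ above using $h^1$ and control $h^0(C,\sL(-D))$ via the fact that $s_2$ induces a nonzero section of $\sL(-D)$, then combine with the inequality $h^1(C,\sL(-D)) \le h^1(C,\sL) = m$ (or an analogous comparison) to absorb the $m$ term. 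This semicontinuity argument is where the constant-$h^1$ hypothesis on $\sL$ would do its work.

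For the final irreducibility claim, the plan is much cleaner: on $\cS^{\gg}_{0,\sL}$ we have $d'=0$, so $D=0$ and $s_1$ is a nowhere-vanishing section, giving a \emph{canonical} inclusion $\cO_C \hookrightarrow \sE$ as a subbundle and the fixed extension structure $0 \to \cO_C \to \sE \to \sL \to 0$. Here there is no choice of $D$, the extension class varies in the irreducible (indeed linear) space $\Ext^1(\sL,\cO_C) = H^1(C,\sL^{-1})$, and the remaining section $s_2$ varies in an irreducible family (an affine bundle or torsor over the space of sections of the quotient); fibering an irreducible total space over these irreducible parameter spaces yields irreducibility of $\cS^{\gg}_{0,\sL}$. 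I would phrase this as exhibiting $\cS^{\gg}_{0,\sL}$ as the total space of a tower of affine-bundle-like morphisms over $\Spec K$ (or over the point of $\Pic$ corresponding to $\sL$), each with irreducible fibers, so the conclusion follows formally.
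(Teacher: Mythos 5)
Your setup (the extension $0 \to \sO(D) \to \sE \to \sL(-D) \to 0$ induced by $s_1$, and the stratification over the choice of $D$) is the same as the paper's, but there is a genuine gap at the heart of your dimension count, in step (3): you assert that $s_2$ ranges over a torsor under $H^0(C,\sO(D))$ sitting over the \emph{full} $h^0(C,\sL(-D))$-dimensional space of quotient sections. This amounts to assuming that $H^0(C,\sE) \to H^0(C,\sL(-D))$ is surjective, which is false: a section $s$ of $\sL(-D)$ lifts to $\sE$ only if it lies in the kernel of the connecting map $\delta_\eta\colon H^0(C,\sL(-D)) \to H^1(C,\sO(D))$ attached to the extension class $\eta$, and ignoring this condition inflates the count by $h^1(C,\sO(D))$. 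Concretely, on the stratum $d'=0$ your tally gives (stack of extensions) $+$ (sections) $= (d+g-1) + (d+1-g+m) + 1 = 2d+m+1$, which exceeds the claimed bound $2d+1-g+m$ by exactly $g$; so for any $d'<g$ your count does not yield the inequality. Handling this lifting condition is precisely the content of the paper's proof: it fibers the other way, fixing the nonzero quotient section $s$ first and counting compatible extensions. By Serre duality $\eta$ is a functional on $H^0(C,\Omega^1_C\otimes\sL(-2D))$, and $s$ lifts if and only if $\eta$ annihilates the image of the injective map $\otimes s\colon H^0(C,\Omega^1_C(-D)) \to H^0(C,\Omega^1_C\otimes\sL(-2D))$; since that image has \emph{constant} dimension $h^1(C,\sO(D))=\ell-d'-1+g$ as $s$ varies, the fiber over each $s$ is a linear space of constant (stack) dimension $d-d'-\ell$, and the numbers close up. Your direction of fibration cannot be patched cheaply, because for fixed $\eta$ the space of liftable $s$ jumps, and bounding the locus of $\eta$ where it jumps is essentially the same incidence-correspondence computation the paper does. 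Two further slips: your proposed semicontinuity input $h^1(C,\sL(-D)) \le h^1(C,\sL)$ is backwards (twisting down by an effective divisor can only increase $h^1$), and the constant-$h^1$ hypothesis you invoke belongs to the families statement (Theorem \ref{thm:main-families}), not to this proposition, which concerns a single $\sL$ on a single curve. What is actually needed on the divisor side is the standard fact, used in the paper, that the stratum of $S^{d'}C$ where $h^0(C,\sL(-D)) = h^0(C,\sL)-d'+d''$ has dimension at most $d'-d''$.

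The same gap undermines your irreducibility argument for $\cS^{\gg}_{0,\sL}$: the space of choices of $s_2$ for fixed $\eta$ is \emph{not} an affine bundle over $H^0(C,\sL)$, since its dimension jumps with $\eta$, and a fibration with irreducible base and irreducible but jumping fibers need not have irreducible total space (e.g.\ $V(xy) \to \AA^1$, projecting to $x$). The paper again fibers over the irreducible space of sections $s$, where the fibers are linear of constant dimension; it then uses the lower bound of Theorem \ref{thm:main-1} to see that $\cE_0$ is pure of dimension $2d-g+m$, so that every component must dominate the $s$-space, and irreducibility follows. Some such purity or constancy input is unavoidable, and your sketch does not supply it.
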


\begin{proof} We have a morphism from
$\cS^{\gg}_{d',\sL}$ to the symmetric product $S^{d'} C$ by taking the
vanishing divisor of $s_1$, and we denote by
$\cS^{\gg}_{D,\sL}$ the fiber of this morphism over the point corresponding
to a given effective divisor $D$. For a given choice of $D$,
consider
the stack $\cE_{D}$ parametrizing pairs $(\eta,s)$, where $\eta$ is an
extension of $\sL(-D)$ by $\sO(D)$, and $s \in H^0(C,\sL(-D))$ is nonzero
and lifts inside $\eta$. We then have a morphism 
\begin{equation}\label{eq:strata-map}
\cS^{\gg}_{D,\sL} \to \cE_{D}
\end{equation}
obtained by letting $\eta$ be the extension induced by $s_1$, and
letting $s$ be the image of $s_2$ under the extension.

Write $\ell:=h^0(C,\sO(D))$. We see that the fibers of \eqref{eq:strata-map} 
are determined by the choice of $s_2$ lifting $s$, and thus the morphism 
has relative dimension $\ell$. 
Now, an extension of $\sL(-D)$ by 
$\sO(D)$ corresponds to an element of $H^1(\sL(-2D))$. The infinitesimal 
automorphisms of such an extension are in correspondence with 
$H^0(\sL^{-1}(2D))$,
so the dimension of the stack of extensions
is $-\chi(\sL^{-1}(2D))=d-2d'+g-1$.
Using
Serre duality, an extension corresponds to an element of
$H^0(C,\Omega^1_C \otimes \sL(-2D))^*$, which we still denote by $\eta$.
The section $s \in H^0(C,\sL(-D))$ lifts under $\eta$ if and only if 
the kernel of $\eta$ contains the image of the (injective) map 
$$\otimes s:H^0(C,\Omega^1_C(-D)) \to H^0(C,\Omega^1_C \otimes \sL(-2D)).$$
We have
$$h^0(C,\Omega^1_C(-D))=h^1(C,\sO(D))=\ell-d'-1+g,$$
so for a given $s \in H^0(C,\sL(-D))$, the dimension of
the choices for $\eta$ is 
$$d-2d'+g-1-(\ell-d'-1+g)=d-d'-\ell.$$
On the other hand, set 
$d''=d'-h^0(C,\sL)+h^0(C,\sL(-D))$.
There are $h^0(C,\sL(-D)) =d''-d'+d+1-g+m$ dimensions for $s$, so we 
conclude that $\cE_{D}$ has dimension $2d+1-g+m-2d'+d''-\ell$, and
$\cS^{\gg}_{D,\sL}$ has dimension $2d+1-g+m-2d'+d''$.
Finally, for a given value of $d''$, the corresponding stratum of $S^{d'} C$ 
has dimension at most $d'-d''$, so we find that
$\cS^{\gg}_{d',\sL}$ has dimension at most $2d+1-g+m-d'$, as desired.

Finally, to see that $\cS^{\gg}_{0,\sL}$ is irreducible, in the case $d'=0$
we necessarily have $D=0$ and our stratification is the trivial one
corresponding to $\ell=1$. We observe
that the space of choices for $s \in H^0(C,\sL)$ are irreducible, and
given a choice of $s$, the spaces of extensions $\eta$ is also irreducible.
Now, the preimage of $\cE_{0}$ is an open substack of
$\cS^{\gg}_{\sL}$, and hence we have a dimension lower bound as well,
concluding that $\cE_{0}$ is pure of dimension $2d-g+m$. It then
follows that every component of $\cE_{0}$ must dominate the
space of choices for $s$,
and thus that $\cE_{0}$ is irreducible.
By the same argument, we then conclude that $\cS^{\gg}_{0,\sL}$, being
smooth with connected fibers over $\cE_0$, is likewise irreducible. 
\end{proof}

\begin{defn} Let $\cG^{2,\dg}_{2,\sL}(C)$ be the closed substack of 
$\cG^{2}_{2,\sL}(C)$ on which $V$ is not generically generating. 
\end{defn}

\begin{prop}\label{prop:exts2} If $C$ is Brill-Noether general with respect 
to $\fg^{1}_{d'}$'s for all $d'>0$, we have
$$\dim \cG^{2,\dg}_{2,\sL}(C) \leq 2d-3-g+m,$$ 
and equality holds if and only if $h^0(C,\sL)=0$.
\end{prop}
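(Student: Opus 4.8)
\textbf{The plan} is to bound $\dim\cG^{2,\dg}_{2,\sL}(C)$ by stratifying according to the line subbundle into which the sections are forced. A triple $(\sE,\psi,V)$ lies in $\cG^{2,\dg}_{2,\sL}(C)$ exactly when $V\otimes\sO_C\to\sE$ has generic rank $1$; writing $\sM\subseteq\sE$ for the saturation of its image, we obtain a line subbundle with $V\subseteq H^0(\sM)$, hence $\ell:=h^0(\sM)\geq2$, and a presentation
$$0\to\sM\to\sE\to\sL\otimes\sM^{-1}\to0,$$
for which $\det\sE\cong\sL$ is automatic. Setting $d':=\deg\sM$, I would stratify by the pair $(d',\ell)$; since every point has a well-defined saturation, these strata partition $\cG^{2,\dg}_{2,\sL}(C)$ and it suffices to bound each. (Equivalently one may pass to the $GL_2$-torsor $\cS_\sL$ and fiber the degenerate locus over the symmetric product by $D=\dv(s_1)$, so that $\sM\cong\sO(D)$, closely mirroring the proof of Proposition~\ref{prop:exts1}.)

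On a fixed stratum the dimension is a sum of three contributions. First, the stack of line bundles $\sM$ of degree $d'$ with $h^0=\ell$ has dimension $\dim W^{\ell-1}_{d'}-1$. Second, the Grassmannian $G(2,H^0(\sM))$ of choices of $V$ contributes $2(\ell-2)$. Third, the stack of extensions together with $\psi$: the classes form $\Ext^1(\sL\otimes\sM^{-1},\sM)=H^1(\sM^2\otimes\sL^{-1})$, the choice of $\psi$ contributes a factor $\Gm$, and one quotients by the automorphisms $\Hom(\sL\otimes\sM^{-1},\sM)=H^0(\sM^2\otimes\sL^{-1})$ and by the scalars acting on the quotient line bundle; the $\psi$-factor cancels the latter scalars, leaving dimension $-\chi(\sM^2\otimes\sL^{-1})=d-2d'+g-1$. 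Summing, each stratum has dimension
$$\dim W^{\ell-1}_{d'}+2\ell+d-2d'+g-6.$$

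The crux is the Brill--Noether estimate needed to maximize this. Writing $r=\ell-1\geq1$, the displayed quantity is at most $d-4$ precisely when $\dim W^r_{d'}\leq 2(d'-r)-g$. I would obtain this in two steps using only the hypothesis of the proposition. First, for a fixed point $p$ the assignment $\sM\mapsto\sM(-p)$ injects $W^r_{d'}$ into $W^{r-1}_{d'-1}$ (valid since removing a point drops $h^0$ by at most one), so by iteration $\dim W^r_{d'}\leq\dim W^1_{d'-r+1}$; this holds for every curve. Second, Brill--Noether generality for $\fg^1$'s gives $\dim W^1_{d'-r+1}\leq 2(d'-r+1)-g-2=2(d'-r)-g$ (the stratum being empty if the right-hand side is negative). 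This reduction to the $\fg^1$ case --- precisely the available hypothesis --- is the step I expect to carry the real weight; by contrast the extension count of the previous paragraph, while requiring care with the scalar automorphisms and $\psi$, is routine and parallels the dimension count in \S5 of \cite{os16}.

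Granting the estimate, every stratum has dimension at most $d-4$, so $\dim\cG^{2,\dg}_{2,\sL}(C)\leq d-4$; and the stratum $\ell=2$ with $\sM$ a general $\fg^1_{d'}$ (which exists for $\fg^1$-general $C$ once $2d'\geq g+2$, and whose saturation is $\sM$ even if the pencil has base points) attains this value, so in fact $\dim\cG^{2,\dg}_{2,\sL}(C)=d-4$ regardless of $h^0(C,\sL)$. Finally, Riemann--Roch for $\sL$ gives $h^0(C,\sL)=d+1-g+m$, whence $d-4=(2d-3-g+m)-h^0(C,\sL)$. Since $h^0(C,\sL)\geq0$, this yields $\dim\cG^{2,\dg}_{2,\sL}(C)\leq 2d-3-g+m$ with equality if and only if $h^0(C,\sL)=0$, as claimed.
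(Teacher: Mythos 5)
Your proof is correct in substance and follows the same strategy as the paper: stratify the degenerate locus by the saturated line subbundle $\sM$ generated by $V$, count extensions of $\sL\otimes\sM^{-1}$ by $\sM$ stack-theoretically (giving $-\chi(\sM^2\otimes\sL^{-1})=d-2d'+g-1$), invoke Brill--Noether generality for pencils, arrive at the uniform bound $d-4$, and finish with the identity $(2d-3-g+m)-(d-4)=h^0(C,\sL)$. Two genuine differences are worth recording. First, you stratify further by $\ell=h^0(\sM)$, which forces you to bound $\dim W^{\ell-1}_{d'}$ for all $\ell$; your reduction via the injections $\sM\mapsto\sM(-p)$, $W^r_{d'}\hookrightarrow W^{r-1}_{d'-1}$, down to the $\fg^1$ case is a clean way to extract this from the stated hypothesis. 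The paper avoids the reduction entirely by parametrizing triples $(\sM,W,\eta)$ with $(\sM,W)$ a $\fg^1_{d'}$ --- i.e.\ working with the stack of pairs rather than with $W^{\ell-1}_{d'}$ --- so that the hypothesis applies verbatim, the choice of $W$ being absorbed into the $\fg^1$ count. Second, for the equality direction you claim $\dim\cG^{2,\dg}_{2,\sL}(C)=d-4$ unconditionally, by asserting that an $\ell=2$ stratum \emph{attains} $d-4$; this is the one under-justified step. Attaining (as opposed to bounding) the dimension requires knowing that your parametrization has generically finite fibers as a map of stacks, i.e.\ that every automorphism of $(\sE,\psi,V)$ comes from an automorphism of the extension data; this does hold --- any automorphism preserving $V$ preserves the saturation $\sM$, hence is upper triangular --- but you never say so, and the analogous point (representability of the comparison map, so that stacky fibers have nonnegative dimension) is also what underwrites your upper bound, where the paper devotes an explicit sentence to it. The paper sidesteps the construction entirely: when $h^0(C,\sL)=0$ every pair is degenerate (generic generation would produce a nonzero section of $\det\sE\cong\sL$), so $\cG^{2,\dg}_{2,\sL}(C)=\cG^{2}_{2,\sL}(C)$ and the needed lower bound is just the bound $\rho-g+m$ of Theorem \ref{thm:main-1}. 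That route is shorter and proves exactly what the statement requires, whereas yours, once the automorphism matching is supplied, proves slightly more.
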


\begin{proof} First observe that if $h^0(C,\sL)=0$, then
$\cG^{2,\dg}_{2,\sL}(C)= \cG^{2}_{2,\sL}(C)$, which has dimension at least
$2d-3-g+m$, so it is enough to show that 
$\dim \cG^{2,\dg}_{2,\sL}(C) \leq 2d-3-g-m$, with strict inequality when
$h^0(C,\sL)>0$.

Given $d'>0$, consider the stack $\cE'_{d'}$ parametrizing 
triples $(\sM,W,\eta)$, where $(\sM,W)$ is a $\fg^1_{d'}$ on $C$, and
$\eta$ is an extension of $\sL\otimes \sM^{-1}$ by $\sM$.
For a given $\sM$, the stack of extensions $\eta$ has dimension calculated
as before:
$$-\chi(\sL^{-1} \otimes \sM^2)=d-2d'+g-1.$$
Since $C$ is Brill-Noether general with
respect to $\fg^{1}_{d'}$'s, the dimension of the stack of pairs $(\sM,W)$ 
is $2d'-2-g-1$ (note that this is $1$ less than the classical number because
we have to take the stack dimension). We conclude that the dimension of
$\cE'_{d'}$ is $d-4$.

Now, noting that the form of the extension $\eta$ induces an isomorphism
$\det \sE \risom \sL$, we obtain a forgetful morphism 
\begin{equation}\label{eq:strata-map2}
\cE'_{d'} \to \cG^{2,\dg}_{2,\sL}(C).
\end{equation}
Moreover, any nontrivial automorphism of an object of $\cE'_{d'}$ induces a
nontrivial automorphism of $\sE$ and hence maps to a nontrivial automorphism
in $\cG^{2,\dg}_{2,\sL}(C)$, so we conclude that \eqref{eq:strata-map2} is
relatively representable by algebraic spaces, and in particularly has 
nonnegative relative dimension. 

Now, as $d'$ varies over all positive values, the union of the images
of the morphisms \eqref{eq:strata-map2} surjects onto 
$\cG^{2,\dg}_{2,\sL}(C)$, so we conclude that the dimension of 
$\cG^{2,\dg}_{2,\sL}(C)$ is at most the supremum of the dimensions of the
$\cE'_{d'}$, which is $d-4$.
Finally, 
$$2d-3-g+m-(d-4) = d+1-g+m = h^0(C,\sL),$$
so we obtain the desired statement.
\end{proof}

\begin{defn} Suppose $h^0(C,\sL)>0$ and $d>0$. Let $\cS^{\ns}_{\sL}$ be the 
closed substack of $\cS^{\gg}_{0,\sL}$
on which $\sE$ is not stable. Given $d' \geq \frac{d}{2}$, denote by
$\cS^{\ns}_{d',\sL}$ the locally closed substack of $\cS^{\ns}_{\sL}$ on
which $\sE$ has a destabilizing line subbundle of degree $d'$.
\end{defn}

Thus, as $d'$ varies, the $\cS^{\ns}_{d',\sL}$ give a stratification of
$\cS^{\ns}_{\sL}$.

\begin{prop}\label{prop:exts-stable} We have
$$\dim \cS^{\ns}_{d',\sL}< 2d+1-g+m$$
if $C$ is not hyperelliptic and $d>2$ or if $C$ is hyperelliptic and
$d>4$.
\end{prop}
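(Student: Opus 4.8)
The plan is to bound each stratum $\cS^{\ns}_{d',\sL}$ separately, for $\tfrac{d}{2}\le d'\le d$, and to compare with the dimension $2d+1-g+m$ of the ambient irreducible stack $\cS^{\gg}_{0,\sL}$ (the $d'=0$ case of Proposition \ref{prop:exts1}). A point of $\cS^{\ns}_{d',\sL}$ determines a destabilizing sub-line-bundle $\sM\subseteq\sE$ of degree $d'$, hence an exact sequence $0\to\sM\to\sE\xrightarrow{q}\sL\otimes\sM^{-1}\to 0$ with $\deg(\sL\otimes\sM^{-1})=e:=d-d'$. Since $s_1$ is nowhere vanishing and $\deg\sM=d'>0$, the section $s_1$ cannot lie in $\sM$ (otherwise $\sM$ would carry a nowhere-vanishing section and be trivial), so $q(s_1)\ne 0$; in particular $\sL\otimes\sM^{-1}$ is effective and $0\le e\le\tfrac d2$. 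The first step is thus to set up, for each $d'$, a parameter stack surjecting onto $\cS^{\ns}_{d',\sL}$ and built from this destabilizing sequence, and then to estimate its dimension.

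For the parametrization I would choose, in order: the line bundle $\sM$ (equivalently its quotient $\sL\otimes\sM^{-1}$), the images $\sigma_1=q(s_1)$ and $\sigma_2=q(s_2)$ in $H^0(\sL\otimes\sM^{-1})$, an extension class in $\Ext^1(\sL\otimes\sM^{-1},\sM)=H^1(\sM^2\otimes\sL^{-1})$ for which $\sigma_1$ and $\sigma_2$ both lift, and finally the lifts $s_1,s_2$ themselves (each with an $H^0(\sM)$ of freedom). The dimension is then computed exactly as in Proposition \ref{prop:exts1}: the stack of extensions contributes $-\chi(\sM^2\otimes\sL^{-1})=d-2d'+g-1$, the lifting conditions are read off by Serre duality as orthogonality of the extension class to the images of the multiplication maps by $\sigma_i$, and the leftover section data is controlled by the base-point-free pencil trick. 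The key bookkeeping point is the dichotomy on $\dim q(V)$: if $q|_V$ is injective then $\sigma_1,\sigma_2$ are independent, forcing $h^0(\sL\otimes\sM^{-1})\ge 2$, whereas if $\dim q(V)=1$ then after a change of basis $s_2\in H^0(\sM)$ and the stratum is visibly smaller. These cases must be counted separately, and the $\dim q(V)=2$ case is where the estimate is tightest.

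To convert the count into a strict inequality I would stratify the choice of $\sM$ by the value of $h^0(\sL\otimes\sM^{-1})$, so that the locus of admissible quotients with $h^0=r+1$ has dimension at most $\dim W^r_e(C)$, and then invoke Clifford's theorem together with Martens' bound $\dim W^r_e\le e-2r$ (strict when $C$ is nonhyperelliptic). The trade-off between the size of this Brill--Noether stratum and the $2(r+1)$ section-parameters it permits is what drives the numerics; since $e\le\tfrac d2$, the generic ($r=0$) stratum already yields the inequality once $d>2$. The hard part will be the extremal configurations in the $\dim q(V)=2$ case, where the quotient is a pencil, e.g.\ a multiple of the hyperelliptic $\fg^1_2$: here Clifford holds with equality, the destabilizing family is as large as possible, and a direct computation shows that such a stratum attains dimension exactly $2d+1-g+m$ when $C$ is hyperelliptic and $d=4$. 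It is precisely the need to defeat these configurations that forces the thresholds $d>2$ in the nonhyperelliptic case and $d>4$ in the hyperelliptic case, and ruling them out (together with checking strictness in the remaining boundary cases) is the main obstacle.
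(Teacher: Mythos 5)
Your overall strategy mirrors the paper's proof: parametrize the unstable locus by extension data, convert the two lifting conditions into orthogonality (via Serre duality) to images of multiplication maps, and finish with Clifford-type estimates plus a hyperelliptic case analysis. The difference is which extension you use. The paper exploits the nowhere-vanishing $s_1$ to write $0 \to \sO \to \sE \to \sL \to 0$, so that the destabilizing $\sM$ becomes a section $t$ of $\sL\otimes\sM^{-1}$ which must lift in $\eta\otimes\sM^{-1}$, and the remaining section data is a single element $s\in H^0(C,\sL)$; the only correlation to track is the divisor $D''=\gcd(\dv s,\dv t)$. You instead use the destabilizing sequence $0\to\sM\to\sE\to\sL\otimes\sM^{-1}\to 0$ and lift $q(s_1),q(s_2)$. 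This is not a harmless change: in your frame the section data is split between $H^0(C,\sM)$ (the lifts) and $H^0(C,\sL\otimes\sM^{-1})$ (the $\sigma_i$), and these choices are strongly correlated with each other and with the choice of $\sM$, a correlation your stratification by $h^0(C,\sL\otimes\sM^{-1})$ alone does not see.

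That correlation is exactly where the proposal breaks. The claim that the $\dim q(V)=1$ stratum is ``visibly smaller'' is false under your own counting scheme: on that stratum one needs both $h^0(C,\sM)>0$ (for $s_2'$) and $h^0(C,\sL\otimes\sM^{-1})>0$ (for $\sigma_1$), and counting the choice of $\sM$ by a Brill--Noether bound and the sections independently gives, after the extension-plus-lift count (which contributes $d-d'$ by Riemann--Roch), a bound of roughly $d+h^0(C,\sM)+h^0(C,\sL\otimes\sM^{-1})$. This exceeds the target $2d+1-g+m$ whenever $h^0(C,\sM)+h^0(C,\sL\otimes\sM^{-1})>h^0(C,\sL)$, which occurs in nonempty cases: for instance if $h^0(C,\sL)=1$ the stratum contains split bundles $\sM\oplus(\sL\otimes\sM^{-1})$ with $\dv(s_2')+\dv(\sigma_1)\in|\sL|$, the target is $d+1$, and the naive bound is at least $d+2$. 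The repair is to choose first the divisor $\dv(s_2')+\dv(\sigma_1)\in|\sL|$ (only $h^0(C,\sL)-1$ parameters) and note that its splittings are finite in number --- which is precisely the $D'=D''$ bookkeeping in the paper, automated there by putting all section data inside $H^0(C,\sL)$. (Relatedly, your ``generic $r=0$ stratum'' cannot carry two independent $\sigma_i$, so it lies in the very case you dismissed.) Two further gaps: Martens' bound is only valid for $2\le e\le g-1$, which fails when $m=0$ and $d$ is large (the paper covers that regime as the case $\deg \Omega^1_C(-D'+D'')<0$, and you would need a substitute, e.g.\ a count of base-point-free pencils of nonspecial degree); and, most importantly, you never derive the strict inequality with the thresholds $d>2$ and $d>4$ --- you explicitly defer defeating the extremal pencil configurations as ``the main obstacle,'' but that deduction is the actual content of the proposition, so the proposal stops short of a proof.
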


\begin{proof} Consider the stack $\cE''_{d'}$ parametrizing tuples 
$(\eta,s,\sM,\iota)$, where $\eta$ is an extension of $\sL$ by $\sO$,
$s \in H^0(C,\sL)$ lifts in $\eta$, $\sM$ is a line bundle of degree $d'$,
and $\iota:\sM \to \sE$ imbeds $\sM$ as a line subbundle.
Given such a tuple, this yields
a map $\sM \to \sL$, which must be nonzero since $d'>0$. Thinking of
this map as a section $t \in H^0(C,\sL \otimes \sM^{-1})$, the condition
that it came from a map $\sM \to \sE$ is equivalent to the condition that
$t$ lifts in the extension $\eta \otimes \sM^{-1}$
$$0 \to \sM^{-1} \to \sE \otimes \sM^{-1} \to \sL \otimes \sM^{-1} \to 0.$$
As before, this in turn is equivalent to asking that the image of
$H^0(C,\Omega^1_C \otimes \sM)$ under the map
$$\otimes t: H^0(C,\Omega^1_C \otimes \sM) \to H^0(C,\Omega^1_C \otimes \sL)$$
be contained in the kernel of the
extension $\eta \otimes \sM^{-1}$, considered as an element of
$H^0(C,\Omega^1_C \otimes \sL)^*$.
We thus need to determine how this condition interacts with the condition
that $s$ must lift to $\sE$ as well.

Let $D =\dv s$, and $D' = \dv t$, so that $\deg D'=d-d'$. Let 
$D''=\gcd(D,D')$, and set $d''=\deg D''$.  Observe that the
condition that a nonzero element $s' \in H^0(C,\Omega^1_C \otimes \sL)$ be 
in the image of $H^0(C,\Omega^1_C \otimes \sM)$ under $\otimes t$ is
precisely equivalent to having $D' \leq \dv s'$, and similarly $s'$
is in the image of $H^0(C,\Omega^1_C)$ under $\otimes s$ if and only if
$D \leq \dv s'$. For a given $s$ and $t$, we want to compute the dimension
of the stack of extensions whose kernels contain both images, so we need to
compute the dimension of the span of the images. Since we know the 
dimensions of each image, it suffices to compute the dimension of the
intersection. We have that $s'$ is in the intersection of the images if
and only if $\lcm(D,D')=D+D'-D'' \leq \dv s'$, so the intersection of the
images is given by 
$$H^0(C,\Omega^1_C \otimes \sL(-D-D'+D'')) \cong H^0(C,\Omega^1_C(-D'+D'')).$$
First suppose that $\deg \Omega^1_C(-D'+D'') \geq 0$.
Then Clifford's theorem implies that
$$\frac{2g-2-d+d'+ d''}{2}+1=g-\frac{d-d'-d''}{2},$$
with equality possible only if $\Omega^1_C(-D'+D'')\cong \sO_C$, if
$D'=D''$, or if $C$ is hyperelliptic. Thus, the span of 
the images has dimension at least
$$g+(g-1+d')-(g-\frac{d-d'- d''}{2})=g-1+\frac{d+d'- d''}{2},$$
and the dimension of the choices of extensions for a given 
$s \in H^0(C,\sL)$ and $t \in H^0(C,\sL \otimes \sM^{-1})$ is at most
$$d+g-1-(g-1+\frac{d+d'-d''}{2})=\frac{d-d'+ d''}{2}.$$
As before, the choices for $s$ add $d+1-g+m$ dimensions, while choosing
the pair $(\sM,t)$ is just equivalent to choosing any effective divisor
of degree $d-d'$ containing $D''$, so adds $d-d'-d''$ dimensions. Since
$d' \geq \frac{d}{2}$, the total dimension of $\cE''_{d'}$ is at most 
$$\frac{5d-3d'- d''}{2}+1-g+m \leq \frac{7d}{4}+1-g+m.$$

Considering the stack of tuples $(\eta,s,\sM,\iota,s_2)$, where
$(\eta,s,\sM,\iota)$ is as in the definition of $\cE''_{d'}$, and $s_2$ is
a lift of $s$, we obtain a correspondence between
$\cS^{\ns}_{d',\sL}$ and $\cE''_{d'}$. A fiber of the map to
$\cE''_{d'}$ corresponds to the choices of $s_2$, which form a torsor over
$H^0(C,\sO)$. Thus, the fibers are $1$-dimensional. On the other hand, the
morphism to $\cS^{\ns}_{d',\sL}$ is surjective with fibers representable
by algebraic spaces, so we conclude that
$\dim \cS^{\ns}_{d',\sL} \leq \dim \cE''_{d'}+1$, yielding
$$\dim \cS^{\ns}_{d',\sL} \leq \frac{7d}{4}+2-g+m.$$

This already yields the desired inequality in the hyperelliptic case.
In the nonhyperelliptic case, if we had strict inequality in Clifford's
theorem, the two sides had to differ by at least $\frac{1}{2}$, so the
relevant open substack of $\cS^{\ns}_{d',\sL}$ has dimension bounded
by $\frac{7d+2}{4}+1-g+m$,
which is also enough to obtain the desired inequality. On the other hand,
if either $\Omega^1_C(-D'+D'')\cong \sO_C$ or $D'=D''$, we can calculate
directly, and in both cases obtain that the corresponding strata of
$\cS^{\ns}_{d',\sL}$ have dimension bounded by 
$$2d-d'+2-g+m \leq \frac{3d}{2}+2-g+m,$$
again yielding the desired inequality. Finally, if 
$\deg \Omega^1_C(-D'+D'') < 0$, we can again calculate directly, finding
$$\dim \cS^{\ns}_{d',\sL} \leq 2d+2-2g+m,$$
which gives the asserted inequality for $g \geq 2$.
\end{proof}

\begin{proof}[Proof of Theorem \ref{thm:main-2}] It is clear that if
$h^0(C,\sL)=0$, then $\cG^{2,\gg}_{2,\sL}(C)$ must be empty. Conversely,
if $h^0(C,\sL)>0$, then taking the trivial extension of $\sL$ by $\sO$
shows that $\cG^{2,\gg}_{2,\sL}(C)$ is nonempty. By Theorem \ref{thm:main-1},
it has dimension at least $\rho-g+m=2d-3-g+m$. Then, recalling that
$\cS^{\gg}_{d',\sL}$ is smooth of relative dimension $4$ over 
$\cG^{2,\gg}_{2,\sL}(C)$, Proposition \ref{prop:exts1} gives us the necessary
upper bound to conclude the dimension is equal to $\rho-g+m$.
This argument also shows that the open substack of pairs containing
a nowhere vanishing global section is dense in
$\cG^{2,\gg}_{2,\sL}(C)$, and in particular we conclude that
$\cG^{2,\gg}_{2,\sL}(C)$ is irreducible from the statement of Proposition
\ref{prop:exts1} that $\cS^{\gg}_{0,\sL}$ is irreducible.

Next, if $C$ is Brill-Noether general with respect to $\fg^1_{d'}$'s for 
all $d'$, then Proposition \ref{prop:exts2} implies that the complement of
$\cG^{2,\gg}_{2,\sL}(C)$ in $\cG^2_{2,\sL}(C)$ can have dimension at
most $\rho-g+m$, with equality if and only if $h^0(C,\sL)=0$, so we conclude 
that $\cG^{2}_{2,\sL}(C)$ is pure of dimension $\rho-g+m$, and that
$\cG^{2,\gg}_{2,\sL}(C)$ is dense in $\cG^2_{2,\sL}(C)$ when $h^0(C,\sL)>0$.

Finally, when $h^0(C,\sL)>0$, we see from Proposition \ref{prop:exts-stable}
that the complement of the stable locus in $\cG^{2,\gg}_{2,\sL}(C)$ must
have strictly smaller dimension under the stated hypotheses, so we conclude
that $\cG^{2,\st}_{2,\sL}(C)$ contains a nonempty open substack, as desired. 
\end{proof}

We also obtain a corollary for varying determinant loci in the case of rank 
$2$, which refines the main dimension result of Teixidor i Bigas in
\cite{te4}.

\begin{cor}\label{cor:rank-two-vary} Fix $d,g,m \geq 0$, let $C$ be a 
Brill-Noether general curve, and suppose that $\ell=d+1-g+m$ is nonnegative.
Let $S$ be the locally 
closed subvariety
$$W^{\ell-1}_d \smallsetminus W^{\ell}_d \subseteq \Pic^d(C),$$
and let $\sL$ be the restriction of the Poincare line bundle to $S \times C$. 
Then $\cG^2_{2,\sL}(C/S)$ is nonempty if and only if 
$m \ell \leq g$, and in this case it has pure dimension 
\begin{equation}\label{eq:rho-varying} 
\rho-(\ell-1) m.\end{equation}
Similarly, the open substack $\cG^{2,\gg}_{2,\sL}(C/S)$ is nonempty (necessarily
of the same dimension) if and only 
if $\ell > 0$ and $m \ell \leq g$.
\end{cor}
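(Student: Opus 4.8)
The plan is to view $\cG^2_{2,\sL}(C/S)$ through its projection to $S$ and to pin down its dimension by combining a fiberwise upper bound coming from Theorem \ref{thm:main-2} with the relative lower bound of Theorem \ref{thm:main-families}.

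First I would analyze the base. By Riemann--Roch, $S$ is exactly the locus in $\Pic^d(C)$ on which $h^0(\sL|_s)=\ell$, equivalently $h^1(\sL|_s)=m$; in particular both invariants are constant along $S$, which is precisely what the hypotheses of Theorems \ref{thm:main-2} and \ref{thm:main-families} require. Since $C$ is Brill--Noether general, $W^{\ell-1}_d$ has dimension $\rho(g,\ell-1,d)=g-\ell(g-d+\ell-1)$, and the identity $g-d+\ell-1=m$ (immediate from $\ell=d+1-g+m$) simplifies this to $g-\ell m$. As $W^\ell_d$ is empty or of strictly smaller dimension, $S$ is open and dense in $W^{\ell-1}_d$, hence smooth and equidimensional of dimension $g-\ell m$; in particular $S$, and with it the whole construction, is nonempty precisely when $g-\ell m\geq 0$, i.e.\ when $m\ell\leq g$.

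Next I would bound the fibers. The fiber of $\cG^2_{2,\sL}(C/S)$ over $s\in S$ is $\cG^2_{2,\sL|_s}(C)$, which is always nonempty---for instance the split bundle $\sM\oplus(\sL|_s)\sM^{-1}$ with $\deg\sM\gg 0$ has determinant $\sL|_s$ and carries at least two independent sections---and by Theorem \ref{thm:main-2} (using that $C$ is Brill--Noether general with $h^1(\sL|_s)=m$) has dimension exactly $\rho-g+m$. Since $S$ is equidimensional of dimension $g-\ell m$ and every fiber has dimension $\leq\rho-g+m$, the total space has dimension at most $(g-\ell m)+(\rho-g+m)=\rho-(\ell-1)m$, and this bounds every component. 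For the matching lower bound I would apply Theorem \ref{thm:main-families} in case (I): with $r=k=2$ the nondegeneracy hypothesis that $V$ lie in no rank-$(r-2)=0$ subbundle is automatic, so the theorem applies at every point, and since $\binom{2}{2}=1$ it shows that each component has dimension at least $\dim S+\rho-g+m=\rho-(\ell-1)m$. The two bounds coincide, giving pure dimension $\rho-(\ell-1)m$, with nonemptiness governed by $S$ and hence equivalent to $m\ell\leq g$.

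Finally, for the generically generated locus, $\cG^{2,\gg}_{2,\sL}(C/S)$ is the open substack whose fiber over $s$ is $\cG^{2,\gg}_{2,\sL|_s}(C)$. By the first part of Theorem \ref{thm:main-2} this fiber is nonempty exactly when $h^0(\sL|_s)=\ell>0$, and then is dense in the full fiber of the same dimension. Hence $\cG^{2,\gg}_{2,\sL}(C/S)$ is nonempty iff $\ell>0$ and $m\ell\leq g$, and, being an open and fiberwise-dense substack of a pure-dimensional stack, it is dense and inherits the dimension $\rho-(\ell-1)m$. The one genuinely nontrivial step is the Brill--Noether bookkeeping of the first paragraph---extracting $\dim S=g-\ell m$ and the equidimensionality needed to run Theorem \ref{thm:main-families} uniformly; once $h^1$ is known to be constant on an equidimensional base, the matching of the upper and lower bounds is formal.
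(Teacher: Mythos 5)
Your proposal is correct and follows essentially the same route as the paper's (very terse) proof: the classical Brill--Noether theorem gives $\dim S = g-m\ell$ and nonemptiness exactly when $m\ell \leq g$, the lower bound on every component comes from the paper's main dimension bound (you cite the relative Theorem \ref{thm:main-families}, which is the careful way to phrase what the paper abbreviates as ``Theorem \ref{thm:main-1}\ldots working fiber by fiber''), and the matching upper bound and the $\cG^{2,\gg}$ nonemptiness criterion come from Theorem \ref{thm:main-2} applied fiberwise. Your additional details---the identity $g-d+\ell-1=m$, the equidimensionality of $S$ needed to invoke Theorem \ref{thm:main-families}, and the split-bundle construction handling nonemptiness of fibers when $\ell=0$---are exactly the bookkeeping the paper leaves implicit.
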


\begin{proof}
The classical Brill-Noether theorem gives that $S$ is non-empty and
$$\dim S = g-m\ell$$
if and only if $m \ell \leq g$. 
We then obtain the desired dimensional lower bound from Theorem 
\ref{thm:main-1}, and the corresponding upper bound from Theorem 
\ref{thm:main-2}, working fiber by fiber. The nonemptiness assertion
likewise follows from Theorem \ref{thm:main-2}. 
\end{proof}

\begin{rem} We observe from \eqref{eq:rho-varying} that there are two 
possibilities for getting dimension exactly $\rho$: either $\ell=1$, or 
$m=0$. For any given $d,g$, one of these is always possible. We also 
see that 
if we consider the degenerate locus, we can allow $\ell=0$ and $m>0$ and 
find that in this varying determinant situation, we actually obtain dimension
strictly greater than $\rho$ on the degenerate locus. This occurs if 
$m=g-d-1>0$, so $g > d+1$. In this case, we check that a degenerate pair 
must have an unstable underlying bundle, so this does not contradict
\cite{te4}.
\end{rem}

\section{Further discussion}

The arguments used to prove Theorem \ref{thm:main-1} show that for any
$k,r,m$, if $h^1(C,\sL)\geq m$, there is an open substack of
$\cG^k_{r,\sL}(C)$ satisfying the dimension lower bound of \eqref{eq:mod-rho}.
The difficulty is in translating the criterion of Proposition 
\ref{prop:iso-intersect} into a concrete nondegeneracy criterion describing 
this open substack, as for instance in the statement of Theorem 
\ref{thm:main-1}.  \textit{A priori}, we have no way of knowing even whether 
the open substack in question is ever nonempty. We observe that for $m \geq 3$
or $r \geq 3$, the formula of \eqref{eq:mod-rho} is in fact increasing in
$k$ for $k$ sufficiently large. This underlines the likelihood that 
nondegeneracy hypotheses will be necessary in these cases.

We now consider several examples, examining the necessity of nondegeneracy
hypotheses, and evaluating our predicted bounds in examples from the
literature of larger-dimensional Brill-Noether loci.

\begin{ex} Although the nondegeneracy hypothesis of Theorem \ref{thm:main-1}
is vacuous in rank $2$, and there are also no nondegeneracy hypotheses in
the main results of \cite{os16}, we mention that as soon as $m \geq 3$, 
even in rank $2$ we will need some nondegeneracy hypotheses in order for the
lower bound \eqref{eq:mod-rho} to be valid. Specifically, for any fixed 
$d$, choose $k$
very large with respect to $d$ and $g$, so that the only pairs $(\sE,V)$ with
$V \subseteq H^0(C,\sE)$, $\deg \sE=d$, and $\dim V=k$ must be degenerate,
with $V$ contained in some high-degree line subbundle of $\sE$.
Fix any line bundle
$\sL$ of degree $d$. Set $d'=k+g-1$, so that every line bundle $\sM$ of
degree $d'$ has $h^0(C,\sM)=k$, but no line bundle of smaller degree has
a $k$-dimensional space of global sections. Let 
$\cU_{d'} \subseteq \cG^k_{2,\sL}(C)$ be the open substack on which the
vector bundles have sublinebundles of degree at most $d'$. 

We then see that $\cU_{d'}$ is pure of dimension
$$2g-2-2d'+d=d-2k.$$
Indeed, by construction it consists entirely of bundles of
form $\sM \oplus (\sL \otimes \sM^{-1})$, with $\sM$ a line bundle of 
degree $d'$. Note that since $d'$ is large with respect to $d$ and $g$,
there are no nontrivial extensions of $\sL \otimes \sM^{-1}$ by $\sM$.
There is a $g$-dimensional space of choices for such a vector bundle,
and taking into account the fixed determinant condition,
the dimension of the automorphism group of each is
$$1+h^0(C,\sM^2 \otimes \sL^{-1})=2d'-d+2-g.$$
This gives the claimed formula for the dimension of $\cU_{d'}$.

In particular, we see that the dimension of $\cU_{d'}$ is decreasing in
$k$. On the other hand, we have already observed that for $m \geq 3$ and
$k$ large, our lower bound
$$\rho-g+m\binom{k}{2}$$
is increasing in $k$. We thus see that whatever nondegeneracy condition
is required for this lower bound to hold, it must be violated by the
present examples.
\end{ex}

In the next two examples, we see that in two interesting examples of
Brill-Noether loci of larger than expected dimension, the discrepancy
is explained by our techniques, and that moreover our lower bound is
sharp in these cases.

\begin{ex}\label{ex:farkas-ortega} In \cite{f-o1},
Farkas and Ortega study the case of odd genus $2a+1$, with 
rank $2$, degree $2a+4$ and $k=4$. They find that while in this case 
$\rho=1$, the dimension of the coarse moduli space of 
$\cG^{4,\st}_{2,2a+4}(C)$ is $2$.
As far as we are aware, this is the only known example
of larger-than-expected dimension in rank $2$ other than those described
explicitly by special determinants. We note however that this example
is nonetheless explained by our work in \cite{os16}: indeed, their 
analysis shows that
$\cG^{4,\st}_{2,2a+4}(C)$ is supported entirely over the locus of 
determinants $\sL$ having $h^1(C,\sL) > 0$. This locus has dimension
$g-5$, and for a fixed such $\sL$ we know that the dimension of 
$\cG^{4}_{2,\sL}(C)$ is at least 
$$\rho-g+\binom{k}{2}=\rho-g+6,$$
so if we allow $\rho$ to vary we conclude that the dimension of
$\cG^{4,\st}_{2,2a+4}(C)$ should be at least $\rho+1=2$, as oberved by 
Farkas and Ortega.
\end{ex}

\begin{ex}\label{ex:3-6} As discussed in \cite{l-m-n1},
Mukai has shown that for a general curve of genus
$9$, there exists a unique stable vector bundle of rank $3$ and degree $16$ 
with a $6$-dimensional space of global sections. In this case, $\rho=-11$. On
the other hand, this vector bundle has canonical determinant, and the modified
expected dimension arising from Theorem \ref{thm:main-1} is
$$\rho-g+\binom{k}{r}=-11-9+\binom{6}{3}=0.$$
In addition, one checks using the generality of the curve that in this case,
stability of the vector bundle implies the nondegeneracy hypothesis of the 
theorem. 
Thus, we see that in at least one interesting example, not only does 
\eqref{eq:mod-rho} give a valid lower bound for the dimension, but it is 
in fact sharp, at least on the stable locus.

Also in \cite{l-m-n1}, Lange, Mercat and Newstead show that on a general 
curve of genus $11$, there exist stable bundles of rank $3$ and degree $20$
with a $6$-dimensional space of global sections, although in this
case $\rho=-5$. These bundles also have canonical determinant, so we
again find that our modified expected dimension is nonnegative, in this
case equal to $4$.
\end{ex}

Finally, we discuss the necessity of restricting to determinant loci
with constant $h^1$ in Theorem \ref{thm:main-families}.

\begin{ex}\label{ex:nonconstant-fail}
In Example 6.1 of \cite{os16},
we consider the case $r=2$, $k=2$, and $d=g-2$.
If we let $S$ be all of $\Pic^d(C)$, and $\sL$ the Poincare line
bundle, then every fiber has nonzero $h^1$, so if Theorem 
\ref{thm:main-families} remained valid without the hypothesis that 
$h^1$ is constant, we could use the $m=1$ case to conclude that
if the relative stack $\cG^2_{2,g-2}(C/S)$ is nonempty, every component has 
dimension at least $\rho+1$. However, the stable locus of 
$\cG^2_{2,g-2}(C/S)$ is in fact nonempty of dimension $\rho$. The explanation 
is that this stable locus is supported over the locus of $\Pic^d(C)$ on 
which fibers of $\sL$ have $h^1$ at least $2$.

Note however that $\cG^2_{2,g-2}(C/S)$ is in fact nonempty over all of
$\Pic^d(C)$ on the degenerate locus, so if we let $S$ be the locus of
$\Pic^d(C)$ on which fibers of $\sL$ have $h^1$ exactly equal to $1$, 
the theorem does imply that $\cG^2_{2,g-2}(C/S)$ has dimension at least
$\rho+1$. We conclude that while a nondegeneracy hypothesis is not
necessary for Theorem \ref{thm:main-2}, it is necessary in the varying
determinant case treated by Teixidor i Bigas.
\end{ex}

Of course, the ultimate goal of the program is to produce modified expected
dimensions which are actually sharp. Theorem \ref{thm:main-2} together
with the more general situation for cases (I) and (II) of Theorem 
\ref{thm:main-1} discussed in the introduction provide some simple cases
where our bounds are already sharp, but are undoubtedly extremely special.
It seems likely that there is a 
degree of inductive structure to the problem, and thus that it makes sense
to focus attention initially on rank $2$. In light of Theorem 1.1 of
\cite{os16}, it is evident that even if we prove dimension bounds as
discussed above in full generality, we will not have sharp results. It
is natural to speculate that given a determinant line bundle $\sL$, there
should be a sequence of expected dimensions, associated to the sequence 
$\delta_1,\delta_2,\dots$, where $\delta_m$ is the minimal degree of an
effective divisor $D_m$ such that $h^1(C,\sL(-D_m)) \geq m$. It is then
possible that the correct expected dimension would be furnished by the
maximum value of this sequence. While it seems likely that Theorem 1.1
of \cite{os16} gives the correct value for $\delta_1$, the analysis for
$\delta_i$ with $i>1$ is subtler, and we do not hazard a guess as to the
correct value.

\bibliographystyle{hamsplain}
\bibliography{hgen}

\end{document}